\newcommand{\sumprime}{\if@display\sideset{}{'}\sum%
            \else\sum'\fi}
\begin{document}

\numberwithin{equation}{section}

\newtheorem{theorem}{Theorem}[section]
\newtheorem{proposition}[theorem]{Proposition}
\newtheorem{conjecture}[theorem]{Conjecture}
\def\theconjecture{\unskip}
\newtheorem{corollary}[theorem]{Corollary}
\newtheorem{lemma}[theorem]{Lemma}
\newtheorem{observation}[theorem]{Observation}
\newtheorem{definition}{Definition}
\numberwithin{definition}{section} 
\newtheorem{remark}{Remark}
\def\theremark{\unskip}
\newtheorem{kl}{Key Lemma}
\def\thekl{\unskip}
\newtheorem{question}{Question}
\def\theexample{\unskip}
\newtheorem{problem}{Problem}
\newtheorem{example}{Example}

\thanks{Supported by National Natural Science Foundation of China, No. 12271101.}

\address [Bo-Yong Chen] {School of Mathematical Sciences, Fudan University, Shanghai, 200433, China}
\email{boychen@fudan.edu.cn}

\address [Yuanpu Xiong] {School of Mathematical Sciences, Fudan University, Shanghai, 200433, China}
\email{ypxiong@fudan.edu.cn}

\title{Type problem and the first eigenvalue}
\author{Bo-Yong Chen and Yuanpu Xiong}

\date{}

\begin{abstract}
In this paper,  we study the relationship between the type problem and the asymptotic behavior of the first eigenvalues $\lambda_1(B_r)$ of ``balls'' $B_r:=\{\rho<r\}$ on a complete Riemannian manfold $M$ as $r\rightarrow +\infty$, where $\rho$ is a Lipschitz continuous exhaustion function with $|\nabla\rho|\leq1$ a.e. on $M$.  We show that $M$  is hyperbolic whenever 
\[
\Lambda_*:= \liminf_{r\rightarrow +\infty} \{ r^2 \lambda_1(B_r)\} >18.624\cdots.
\]
Moreover,  an upper bound of $\Lambda_*$ in terms of volume growth $\nu_*:=\liminf_{r\rightarrow +\infty} \frac{\log |B_r|}{\log r}$ is given as follows
\[
{\Lambda_*} \lesssim \begin{cases}
\nu_*^2,\ \ \ &\nu_*\gg1,\\
\nu_*\log\frac{1}{\nu_*},&1<\nu_*\ll1.
\end{cases}
\]
The exponent $2$ for $\nu_*\gg1$ turns out to be the best possible.

\end{abstract}

\maketitle

\tableofcontents

\section{Introduction}

Let $(M,g)$ be a complete,  non-compact Riemannian manifold,  and denote by $\Delta$ the Laplace operator associated to $g$. An upper semicontinuous function $u$ on $M$ is called {\it subharmonic} if $\Delta u\ge 0$ holds in the sense of distributions.  If every negative subharmonic function on $M$ has to be a constant,  then $M$ is said to be {\it parabolic};  otherwise $M$ is called {\it hyperbolic}.  It is well-known that $M$ is parabolic (resp.  hyperbolic) if and only if the Green function $G_M(x,y)$ is infinite (resp.  finite) for all $x\neq{y}$;  or the Brownian motion on $M$  is recurrent (resp.  transient).

The type problem is how to decide the parabolicity and   hyperbolicity through intrinsic geometric conditions.  The case of surfaces is classical,  for  the type of $M$  depends only on the conformal class of $g$,  i.e.,  the complex structure determined by $g$.  Ahlfors \cite{Ahlfors35} and Nevanlinna \cite{Nevanlinna40} first showed that $M$ is parabolic whenever
\begin{equation}\label{eq:parabolic_volume_2}
\int^{+\infty}_1\frac{dr}{|\partial{B(x_0,r)}|}=+\infty,
\end{equation}
where $B(x_0,r)$ is the geodesic ball with center  $x_0\in{M}$ and radius $r$. 
The same conclusion was extended to  high dimensional cases by  Lyons-Sullivan \cite{LyonsSullivan} and Grigor'yan \cite{Grigoryan83,Grigoryan85}.   Moreover,   \eqref{eq:parabolic_volume_1} can be relaxed to
\begin{equation}\label{eq:parabolic_volume_1}
\int^{+\infty}_1 \frac{rdr}{|B(x_0,r)|}=+\infty
\end{equation}
 (cf. Karp \cite{Karp82}, Varopolous \cite{Varopoulos} and Grigor'yan \cite{Grigoryan83,Grigoryan85}, see also Cheng-Yau \cite{ChengYau75}).  We refer to the excellent survey \cite{Grigoryan} of Grigor'yan for other sufficient conditions of parabolicity.  
 
On the other side,  it seems more difficult to find sufficient conditions for hyperbolicity.  Yet there is a classical result  stating that $M$ is hyperbolic whenever  the first (Dirichlet) eigenvalue $\lambda_1(M)$ of $M$ is positive. Recall that 
$$
\lambda_1(M): = \lim_{j\rightarrow +\infty} \lambda_1(\Omega_j)
$$
for some/any increasing sequence of precompact open sets $\{\Omega_j\}$ in $M$,  such that $M=\bigcup \Omega_j$.  Here given a precompact open set $\Omega\subset M$,   define 
$$
\lambda_1(\Omega):=\sup\left\{\frac{\int_\Omega |\nabla \phi|^2 dV}{\int_\Omega \phi^2 dV}:\phi\in \mathrm{Lip}_{\mathrm{loc}}(M),\ \mathrm{supp}\,\phi\subset\overline{\Omega},\ \phi\not\equiv 0 \right\},
$$
Sometimes, it is also natural to consider the bottom $\lambda_1^{ess}(M)$ of the {\it essential} spectrum instead of $\lambda_1(M)$, in connection with the geometry at infinity. Recall that
$
\lambda_1^{ess}(M): = \lim_K \lambda_1(M\setminus K)
$
with $K$ running through all compact subsets of $M$. Clearly, $\lambda_1^{ess}(M)\geq\lambda_1(M)$. The following result which is probably known, but we are unable to find it in literature.

\begin{theorem}\label{th:ess}
 $M$ is hyperbolic if $M$ has infinite volume and $\lambda_1^{ess}(M)>0$.  In other words, if $M$ is parabolic, then either $M$ has finite volume or $\lambda_1^{ess}(M)=0$. 
\end{theorem}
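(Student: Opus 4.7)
The plan is to derive a contradiction from the three conditions: $M$ parabolic, $\mathrm{Vol}(M)=\infty$, and $\lambda_1^{\mathrm{ess}}(M)>0$. The two hypotheses $\lambda_1^{\mathrm{ess}}(M)>0$ and $\mathrm{Vol}(M)=\infty$ will pull against each other through a localized Poincar\'e inequality, while parabolicity will supply the cutoff functions that realize the conflict.

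First I would unpack the two analytic inputs. From $\lambda_1^{\mathrm{ess}}(M)>0$ and the definition $\lambda_1^{\mathrm{ess}}(M)=\lim_K \lambda_1(M\setminus K)$, I can choose a single compact set $K\subset M$ with $\mu:=\lambda_1(M\setminus K)>0$; this yields the Poincar\'e-type inequality
\[
\mu\int_M \psi^2\,dV\;\le\;\int_M |\nabla\psi|^2\,dV
\]
for every $\psi\in\mathrm{Lip}_{\mathrm{loc}}(M)$ with $\mathrm{supp}\,\psi\subset\overline{M\setminus K}$. From parabolicity I would invoke the standard capacity characterization: there exist $\phi_n\in\mathrm{Lip}_c(M)$ with $0\le\phi_n\le 1$, $\phi_n\to 1$ pointwise on $M$, and $\int_M|\nabla\phi_n|^2\,dV\to 0$.

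Next I would glue these together. Fix an open neighborhood $U$ of $K$ with compact closure, and pick a Lipschitz spatial cutoff $\chi$ on $M$ with $0\le\chi\le 1$, $\chi\equiv 0$ on $U$, and $\chi\equiv 1$ outside some compact $K_2\supset\overline U$. Then $\psi_n:=\chi\phi_n$ is compactly supported, Lipschitz, and vanishes on $U\supset K$, so it is an admissible test function for $\lambda_1(M\setminus K)$. Using $|\nabla\psi_n|^2\le 2|\nabla\chi|^2\phi_n^2+2\chi^2|\nabla\phi_n|^2$ and the boundedness / compact support of $|\nabla\chi|$, I get
\[
\mu\int_M \psi_n^2\,dV\;\le\;2\int_M|\nabla\chi|^2\,dV+2\int_M|\nabla\phi_n|^2\,dV\;\le\;C+o(1),
\]
a bound independent of $n$. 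On the other hand, on $M\setminus K_2$ we have $\psi_n=\phi_n$, and since $\phi_n\to 1$ pointwise with $0\le\phi_n\le 1$, Fatou's lemma gives
\[
\liminf_{n\to\infty}\int_M\psi_n^2\,dV\;\ge\;\liminf_{n\to\infty}\int_{M\setminus K_2}\phi_n^2\,dV\;\ge\;\mathrm{Vol}(M\setminus K_2)=+\infty,
\]
because $\mathrm{Vol}(M)=\infty$ and $K_2$ is compact. This contradicts the uniform bound, completing the proof.

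The argument is short, and the only delicate point I foresee is making sure the parabolicity cutoffs $\phi_n$ with the precise properties I am using (compactly supported, $0\le\phi_n\le 1$, $\phi_n\to 1$ pointwise, Dirichlet energy $\to 0$) are indeed equivalent to parabolicity; this is classical (see e.g.\ Grigor'yan's survey cited earlier), so I would just quote it. Everything else is routine cutoff-function bookkeeping.
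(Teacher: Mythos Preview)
Your argument is correct and follows essentially the same route as the paper's proof: both combine the Poincar\'e inequality on $M\setminus K$ furnished by $\lambda_1^{\mathrm{ess}}(M)>0$ with a fixed spatial cutoff $\chi$ and the capacity characterization of parabolicity, then play the resulting energy bound against the infinite volume. The only cosmetic difference is packaging: the paper argues directly, showing that any admissible $\psi$ with $\psi\equiv 1$ on a sufficiently large ball $\overline{B}_{r_1}$ must satisfy $\int_M|\nabla\psi|^2\,dV>1$ (hence $\mathrm{cap}(\overline{B}_{r_1})\ge 1$), whereas you run the contrapositive by feeding in the parabolic cutoffs $\phi_n$ and invoking Fatou; both rest on the same Grigor'yan capacity criterion.
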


As an interesting consequence of Theorem \ref{th:ess},  we shall present a criterion for conformal finiteness of parabolic Riemann surfaces.  Recall that a Riemann surface is said to be {\it conformally finite} if it is conformally equivalent to a compact Riemann surface with finite punctures.  
\begin{corollary}\label{cor:ess}
Let $M$ be a parabolic Riemann surface which admits the (Poincar\'{e}) hyperbolic metric $g_{\rm hyp}$,  i.e.,   the universal covering of $M$ is the unit disc.   Then $M$ is conformally finite if and only if there exist a Riemann surface $(\widetilde{M},g)$ and compacts $K\subset M$ and $\widetilde{K}\subset \widetilde{M}$ such that  $(M\setminus K,g_{\mathrm{hyp}})$ is quasi-isometric to  $(\widetilde{M}\setminus \widetilde{K},g)$,  where  $g$ is  $d$-bounded in the sense of Gromov \cite{Gromov91},  that is,   the K\"ahler form of $g$ may be written as $d\theta$ for some smooth $1$-form $\theta$ on $\widetilde{M}\setminus\widetilde{K}$ such that the length $|\theta|_g$ of $\theta$\/ is uniformly bounded.
\end{corollary}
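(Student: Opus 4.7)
The plan is to prove both directions by combining an explicit cusp computation with Theorem~\ref{th:ess} and Gromov's observation that a $d$-bounded Kähler form forces a positive bottom of the spectrum.

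\emph{Only if.} Assume $M=\overline{M}\setminus\{p_1,\dots,p_k\}$ with $\overline{M}$ compact. Since the universal cover of $M$ is $\mathbb{D}$, each $p_j$ is a cusp for $g_{\mathrm{hyp}}$, so in a small punctured coordinate disc $\{0<|z|<1/2\}$ around $p_j$ one has
\[
g_{\mathrm{hyp}}=\frac{|dz|^2}{(|z|\log|z|)^2}.
\]
Writing $z=re^{i\varphi}$, the corresponding Kähler (area) form equals $\omega_{\mathrm{hyp}}=dr\wedge d\varphi/(r(\log r)^2)=d\theta$ with $\theta:=-d\varphi/\log r$, and a direct computation gives $|\theta|_{g_{\mathrm{hyp}}}\equiv 1$. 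Thus $g_{\mathrm{hyp}}$ is itself $d$-bounded outside a compact $K\subset M$ obtained by removing these punctured discs, and the choice $(\widetilde{M},\widetilde{K},g)=(M,K,g_{\mathrm{hyp}})$ with the identity as the quasi-isometry finishes this direction.

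\emph{Converse.} The first step is to extract $\lambda_1^{ess}(\widetilde{M},g)>0$ from $d$-boundedness. Since $\widetilde{M}$ is a real surface, $\omega_g$ is the Riemannian volume form, so for any test function $\phi\in\mathrm{Lip}_{\mathrm{loc}}(\widetilde{M})$ supported in $\widetilde{M}\setminus\widetilde{K}'$ with $\widetilde{K}'\supset\widetilde{K}$ compact, Stokes' theorem together with $|d\phi\wedge\theta|_g\le|d\phi|_g|\theta|_g\le C|d\phi|_g$ (valid in real dimension two) yield
\[
\int\phi^2\,dV_g=\int\phi^2\,d\theta=-2\int\phi\,d\phi\wedge\theta\le 2C\Bigl(\int\phi^2\,dV_g\Bigr)^{1/2}\Bigl(\int|\nabla\phi|_g^2\,dV_g\Bigr)^{1/2}.
\]
Hence $\lambda_1(\widetilde{M}\setminus\widetilde{K}')\ge 1/(4C^2)$ for every such $\widetilde{K}'$, so $\lambda_1^{ess}(\widetilde{M},g)>0$. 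A bi-Lipschitz change of metric on the end distorts the Rayleigh quotient $\int|\nabla\phi|^2/\int\phi^2$ by bounded factors, so the given quasi-isometry transfers this to $\lambda_1(M\setminus K)>0$; the monotonicity $\lambda_1(M\setminus K')\ge\lambda_1(M\setminus K)$ for $K'\supset K$, immediate from the variational definition, upgrades it to $\lambda_1^{ess}(M,g_{\mathrm{hyp}})>0$.

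Because $M$ is parabolic, Theorem~\ref{th:ess} now forces $\mathrm{Vol}_{g_{\mathrm{hyp}}}(M)<\infty$, i.e.\ the uniformizing Fuchsian group $\Gamma$ has finite hyperbolic area. The classical structure theorem for finitely generated Fuchsian groups then guarantees that $\mathbb{D}/\Gamma$ is biholomorphic to a compact Riemann surface with finitely many points removed, so $M$ is conformally finite. The main obstacles I anticipate are the cusp calculation $|\theta|_{g_{\mathrm{hyp}}}\equiv 1$ (elementary but the heart of the forward direction) and the invocation of Theorem~\ref{th:ess}, which is indispensable since a parabolic surface can a priori have infinite hyperbolic area together with a degenerate essential spectrum.
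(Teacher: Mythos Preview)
Your proof is correct and rests on the same three ingredients as the paper's: Gromov's $d$-bounded $\Rightarrow \lambda_1^{ess}>0$, Theorem~\ref{th:ess}, and the classical passage from finite hyperbolic area to conformal finiteness. The only difference is a reordering in the ``if'' direction: the paper transfers \emph{parabolicity} from $M$ to $\widetilde{M}$ (invoking that quasi-isometry preserves type, \cite[Corollary~5.3]{Grigoryan}), applies Theorem~\ref{th:ess} on $\widetilde{M}$, and then carries the finite volume back to $M$; you instead transfer the \emph{spectral gap} from $\widetilde{M}$ to $M$ via the bi-Lipschitz comparison of Rayleigh quotients and invoke Theorem~\ref{th:ess} directly on $M$. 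Your variant is marginally more self-contained, since the Rayleigh-quotient comparison is immediate while invariance of type under quasi-isometry is a separate (standard) capacity argument. Your explicit cusp primitive $\theta=-d\varphi/\log r$ with $|\theta|_{g_{\mathrm{hyp}}}\equiv 1$ is precisely what the paper has in mind when it calls the forward direction trivial.
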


Recall that two Riemannian manifolds $(M_1,g_1)$ and $(M_2,g_2)$ are {\it quasi-isometric} if there exists a  quasi-isometry  $F: M_1\rightarrow M_2$,  that is,  $F$ is a diffeomorphism from $M_1$ onto $M_2$ such that
 for suitable  constant $C\ge  1$,
$$
C^{-1}{\rm dist}_{M_1}(x,y) \le {\rm dist}_{M_2}(F(x),F(y))\le C\, {\rm dist}_{M_1}(x,y),\ \ \ \forall\,x,y\in M_1.
$$

\begin{remark}
Every parabolic Riemann surface admits a K\"ahler metric which is $d-$bounded outside a compact subset (see \cite{CF},  pp.  393--394). 
\end{remark}

The main focus of this paper is to determine the hyperbolicity in the case  $\lambda_1(M)=0$. Grigor'yan showed that $M$ is hyperbolic if the following Faber-Krahn type inequality holds:
$$
\lambda_1(\Omega) \ge f(|\Omega|),\ \ \ \forall\,\Omega\subset\subset M: |\Omega|\ge v_0>0,
$$
where $f$ is a positive decreasing function on $(0,+\infty)$ such that $\int^{+\infty}_{v_0} \frac{dv}{v^2 f(v)}<+\infty$ (see, e.g., \cite{Grigoryan}, Theorem 10.3). We shall use certain quantity measuring the asymptotic bahavior of $\lambda_1(B_r)$ for certain ``balls'' $B_r$ as $r\rightarrow +\infty$, which seems to be easier to analyze. More precisely, let us first fix  a nonnegative locally Lipschitz continuous function $\rho$ on $M$, which is an exhaustion function (i.e., $B_r:=\{\rho<r\}\subset\subset{M}$ for any $r>0$), such that $|\nabla \rho|\le 1$ holds a.e.  on $M$. Note that  if $\rho$ is the distance ${\rm dist}_M(x_0,\cdot)$ from some $x_0\in M$,  then $B_r$ is precisely the geodesic ball $B(x_0,r)$. 
Define
$$
\Lambda_*:=\liminf_{r\rightarrow +\infty} \{ r^2 \lambda_1(B_r)\}.  
$$

Our main result is given as follows.

\begin{theorem}\label{th:main}
$M$ is hyperbolic if\/ $ \Lambda_* > 4t_0^2 \approx 18.624$, where $t=t_0$ is the solution to the equation
\begin{equation}\label{eq:equation_sh}
\frac{1}{4\,\mathrm{sh}^2(t/4)}+\frac{4}{\mathrm{sh}^2(t)}=1.
\end{equation}
In other words,  $\Lambda_*\leq4t_0^2$ whenever $M$ is parabolic. 
\end{theorem}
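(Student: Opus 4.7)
The strategy is to prove the contrapositive: assuming $M$ is parabolic, I will conclude $\Lambda_* \leq 4t_0^2$. Equivalently, under the hypothesis that $\lambda_1(B_r) \geq \Lambda/r^2$ holds for all large $r$ with some $\Lambda > 4t_0^2$, I aim to derive a uniform positive lower bound for the capacity of a fixed ball in $M$, which is well-known to be equivalent to hyperbolicity.

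The first step is the standard cutoff characterization of parabolicity: $M$ is parabolic if and only if, for some (equivalently every) compact set $K\subset M$, there is a sequence $\phi_n \in \mathrm{Lip}_c(M)$ with $\phi_n \equiv 1$ on $K$, $0\leq\phi_n\leq 1$, and $\int_M |\nabla \phi_n|^2\,dV \to 0$. Arguing by contradiction, I take such a sequence with $K = \overline{B_{r_*}}$, where $r_*$ is chosen large enough that $\lambda_1(B_r)\geq \Lambda/r^2$ for every $r \geq r_*$; the goal is to contradict $\int|\nabla \phi_n|^2\to 0$.

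The heart of the proof is a single-scale capacity estimate. For a parameter $t>0$, a scale $r \geq r_*$, and any cutoff $\phi$ with $\phi\equiv 1$ on $B_r$ and $\mathrm{supp}\,\phi \subset B_{re^t}$, I expect an estimate of the form
\[
\int_M |\nabla \phi|^2\,dV \;\geq\; \frac{\Lambda}{t^2}\left[1 - \frac{1}{4\,\mathrm{sh}^2(t/4)} - \frac{4}{\mathrm{sh}^2(t)}\right] \cdot c(r_*)
\]
for some positive $c(r_*)$ whenever the bracket is positive. Such an inequality should come from testing the Rayleigh-quotient inequality $\lambda_1(B_{re^t}) \int u^2 \leq \int|\nabla u|^2$ against $u = \phi\cdot \chi(\rho)$, where $\chi$ is a radial profile built piecewise from $\mathrm{sh}$ functions --- the extremal one-dimensional profile for the associated Helmholtz-type ODE --- designed to vanish at $\rho=0$ and at $\rho=re^t$ while peaking in between. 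The two terms $\frac{1}{4\,\mathrm{sh}^2(t/4)}$ and $\frac{4}{\mathrm{sh}^2(t)}$ arise respectively from an inner and an outer subannulus of $B_{re^t}\setminus B_r$ on which $\chi$ rises and falls; the equation \eqref{eq:equation_sh} defining $t_0$ is exactly the balance at which the bracket vanishes, so the hypothesis $\Lambda>4t_0^2$, combined with an appropriate choice $t$ slightly below $t_0$, produces a strictly positive lower bound.

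Applying this single-scale estimate with $\phi=\phi_n$, the right-hand side is bounded below by a positive constant independent of $n$, contradicting $\int|\nabla\phi_n|^2\to 0$. The main obstacle I anticipate is identifying and optimizing the profile $\chi$: after expanding $|\nabla(\phi\chi)|^2 = \chi^2|\nabla\phi|^2 + 2\phi\chi\,\nabla\phi\cdot\nabla\chi+\phi^2|\nabla\chi|^2$ and handling the cross term through integration by parts (using $|\nabla\rho|\leq 1$), one must solve the resulting one-dimensional variational problem for $\chi$ and verify that the sharp constant is governed by exactly \eqref{eq:equation_sh}. Once that optimization is in place, the reduction to hyperbolicity via the cutoff criterion is essentially immediate.
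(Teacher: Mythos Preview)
Your overall strategy --- the capacity criterion, testing the Rayleigh inequality against $u=\psi\cdot\chi(\rho)$ with an optimal $\mathrm{sh}$-profile $\chi$ --- is indeed the paper's approach, but the argument is not single-scale, and the constant $c(r_*)$ you posit hides a genuine gap.

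What one application of the Rayleigh inequality actually yields (with $\chi$ supported on $[r/4,2r]$, equal to $1$ on $[r/2,r]$, and built from $\mathrm{sh}$ on the two ramps) is a three-term recursion
\[
A_k \le \frac{A_{k-1}}{4\,\mathrm{sh}^2(A/4)} + \frac{4\,A_{k+1}}{\mathrm{sh}^2(A)} + C\!\int_{2^{k-2}\le\rho\le 2^{k+1}}\!|\nabla\psi|^2\,dV,\qquad A_k:=\frac{1}{2^{2k}}\int_{2^{k-1}\le\rho\le 2^k}\psi^2\,dV,
\]
with $A=\tfrac12\sqrt{C_0/(1+\gamma)}$. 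The coefficients $\tfrac14$ and $4$ come from the dyadic rescaling of the $r^{-2}$ weight, and the arguments $A/4$, $A$ from the widths $1/4$ and $1$ of the two ramping intervals --- this is exactly why \eqref{eq:equation_sh} takes its specific form, and why your $e^t$-scaling with both ramps placed in $B_{re^t}\setminus B_r$ would not reproduce these constants (in the working argument the inner ramp lies \emph{inside} $B_r$, on $[r/4,r/2]$). No single instance of this inequality bounds $\int|\nabla\psi|^2$ from below by a positive number. Only after \emph{summing over all dyadic scales} $k\ge k_0$ do the $A_{k\pm1}$ terms absorb, giving
\[
g(A)\sum_{k\ge k_0} A_k \le \text{(boundary)} + C\int_M|\nabla\psi|^2,\qquad g(A)=1-\frac{1}{4\,\mathrm{sh}^2(A/4)}-\frac{4}{\mathrm{sh}^2(A)}.
\]
Your bracket is precisely $g(A)$, but it multiplies an infinite sum of weighted annular masses, not a fixed $c(r_*)$.

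The second missing piece is that, to conclude, one needs $\sum_k A_k=+\infty$ when $\psi\equiv1$ on a large ball, equivalently $\int_M\frac{dV}{1+\rho^2}=+\infty$. This is a volume-growth statement that is \emph{not} automatic from the eigenvalue hypothesis; the paper proves it as a separate lemma (Lemma~\ref{lm:infinity}), showing that $\lambda_1(B_r)\ge C_1/r^2$ with $C_1>4(\log(2+\sqrt3))^2\approx 6.94$ already forces $|B_{2r}|\ge C_2|B_r|$ with $C_2>4$, hence the required divergence. Your placeholder $c(r_*)$ is standing in for this entire argument, and without it the capacity lower bound cannot be closed.
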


A standard example of parabolic manifolds is the plane $\mathbb R^2$,  for which $\Lambda_* =j_0^2$,  where $j_0^2\approx 5.784$ is the first zero of the Bessel function.  In view of this example and Theorem \ref{th:main},  it is of particular interest to ask the following 

\begin{problem}
What is the best lower bound for $\Lambda_*$ which implies hyperbolicity? 
\end{problem}

\begin{problem}
Does there exist a universal constant $c_0$ such that $M$ is parabolic whenever $\Lambda_*<c_0$? 
\end{problem}

We also present a simple but useful result as follow.  

\begin{proposition}\label{prop:eigenvalue}
Suppose that $\Delta \rho^2 \ge C$.
\begin{itemize}
\item[$(1)$]
If\/ $C>0$,  then
$
\Lambda_* \ge \max\left\{\frac{C}{2e},\frac{C^2}{16}\right\}.
$
\item[$(2)$]
If\/ $C>4$, then $M$ is hyperbolic.
\end{itemize}
\end{proposition}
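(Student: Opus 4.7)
The plan is to treat the two lower bounds in part (1) with different arguments and to prove part (2) by exhibiting an explicit bounded non-constant subharmonic function. The single technical subtlety throughout is the distributional interpretation of $\Delta\rho^2$ when $\rho$ is only locally Lipschitz; since $\Delta\rho^2-C$ is a non-negative Radon measure, the integrations by parts below are justified by approximating Lipschitz test functions by smooth ones.

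For the bound $\Lambda_*\ge C^2/16$ I would fix $\phi\in\mathrm{Lip}_{\mathrm{loc}}(M)$ with $\mathrm{supp}\,\phi\subset\subset B_r$, test $\phi^2$ against $\Delta\rho^2\ge C$, and integrate by parts:
\[
C\int\phi^2\,dV \;\le\; \int\phi^2\,\Delta\rho^2 \;=\; -\int\nabla(\phi^2)\cdot\nabla(\rho^2)\,dV \;=\; -4\int\rho\,\phi\,\nabla\rho\cdot\nabla\phi\,dV.
\]
Using $|\nabla\rho|\le 1$, $\rho<r$ on $\mathrm{supp}\,\phi$, and Cauchy--Schwarz, the right-hand side is at most $4r\bigl(\int\phi^2\bigr)^{1/2}\bigl(\int|\nabla\phi|^2\bigr)^{1/2}$. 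Squaring gives $\lambda_1(B_r)\ge C^2/(16r^2)$, hence $\Lambda_*\ge C^2/16$.

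For the bound $\Lambda_*\ge C/(2e)$ I would apply a Barta-type inequality with the explicit choice $u := r^2-\rho^2$, which is strictly positive and bounded above by $r^2$ on $B_r$, and which satisfies $-\Delta u=\Delta\rho^2\ge C$ distributionally. Starting from the pointwise identity $|\nabla\phi|^2\ge\nabla(\phi^2/u)\cdot\nabla u$ (obtained by expanding $|\nabla\phi-(\phi/u)\nabla u|^2\ge 0$), integrating over $B_r$ and using integration by parts against the measure $\Delta\rho^2$, one finds
\[
\int|\nabla\phi|^2\,dV \;\ge\; \int\nabla(\phi^2/u)\cdot\nabla u\,dV \;=\; \int\frac{\phi^2}{u}\,d(\Delta\rho^2) \;\ge\; C\int\frac{\phi^2}{u}\,dV \;\ge\; \frac{C}{r^2}\int\phi^2\,dV.
\]
This yields the stronger bound $\Lambda_*\ge C$, which a fortiori implies $\Lambda_*\ge C/(2e)$.

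For part (2), I would exhibit the explicit bounded, non-constant, negative subharmonic function $u:=-(1+\rho^2)^{-\alpha/2}$ for any $\alpha\in(0,(C-4)/2]$, which is available precisely because $C>4$. A direct computation yields
\[
\Delta u \;=\; \tfrac{\alpha}{2}(1+\rho^2)^{-\alpha/2-2}\Bigl[(1+\rho^2)\,\Delta\rho^2-(\tfrac{\alpha}{2}+1)|\nabla\rho^2|^2\Bigr],
\]
and inserting $\Delta\rho^2\ge C$ together with $|\nabla\rho^2|^2\le 4\rho^2$ bounds the bracket below by $C+(C-2\alpha-4)\rho^2\ge 0$. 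Thus $u$ is subharmonic on $M$, takes values in $[-1,0)$, and is non-constant because $\rho$ is an exhaustion function; its very existence proves $M$ is hyperbolic. The hardest part of the whole proposition is bookkeeping: handling $\Delta\rho^2$ distributionally and justifying Barta's pointwise identity when $\rho$ only has Lipschitz regularity, but both are routine given that $\Delta\rho^2-C$ is a non-negative Radon measure.
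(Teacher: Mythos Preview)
Your proof is correct. The $C^2/16$ bound in part (1) is identical to the paper's argument. For part (2), both you and the paper build an explicit bounded subharmonic function of the form $-f(\rho^2)$; the paper uses $-\rho^{-2\alpha}$ (composed with a convex cutoff to kill the singularity at $\rho=0$), while your choice $-(1+\rho^2)^{-\alpha/2}$ is cleaner precisely because it needs no such truncation.

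The genuine difference is in the $C/(2e)$ bound. The paper takes $\psi=\exp(\rho^2/2r^2)$, uses a Caccioppoli-type inequality from \cite{CX} to obtain $\int \phi^2\psi\,\Delta\psi\le \int \psi^2|\nabla\phi|^2$, and then pays the factor $e$ coming from $\sup\psi^2/\inf\psi^2$. Your Barta-type argument with $u=r^2-\rho^2$ avoids this loss entirely and gives the stronger conclusion $\lambda_1(B_r)\ge C/r^2$, hence $\Lambda_*\ge C$, which strictly improves the paper's $\max\{C/(2e),C^2/16\}$ for every $C\in(0,16)$. The trade-off is that the paper's exponential-weight method is more flexible (it feeds into other estimates in \cite{CX}), whereas your approach is more elementary, self-contained, and sharper for this particular statement. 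The distributional chain rule you need in part (2) and the integration by parts against the measure $\Delta\rho^2$ in the Barta step are indeed routine once one notes that $\Delta\rho^2-C\,dV\ge 0$ forces $\Delta\rho^2$ to be a Radon measure and that $\phi^2/u$ is Lipschitz with compact support in $B_r$.
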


Let us provide two applications of Proposition \ref{prop:eigenvalue} as follows. First consider a Stein manifold $M$ of complex dimension $n$, i.e., a complex manifold which admits a smooth and strictly plurisubharmonic function $\rho$.   Let $g$ be the K\"{a}hler metric given by $i\partial\bar{\partial}\rho^2$.   Since 
$
|\nabla\rho|\leq1
$
and 
$
\Delta\rho^2\geq2n,
$
it follows immediately that $M$ is hyperbolic with respect to the metric $g$ for $n\geq2$.   Analogously,  
let $M$ be a complete $n-$dimensional minimal submanifold in $\mathbb{R}^N$ and set
$
\rho(x):=\sqrt{x_1^2+\cdots+x_N^2}.
$
Then $|\nabla \rho|=1$ and $\Delta\rho^2\geq{2n}$ hold on $M$,  so that $M$ is hyperbolic for $n\geq3$.  The latter is of course known (see e.g.,  \cite{MP03} or \cite{Forstneric}),  which also indicates that the constant $4$ in Proposition \ref{prop:eigenvalue}/(2) is the best possible.  

It is also reasonable to estimate $\Lambda_*$ through volume growth conditions.  Cheng-Yau \cite{ChengYau75} showed that $\lambda_1(M)=0$ if $M$ has polynomial volume growth.   This was extended by Brooks \cite{Brooks},  who showed that 
if the volume $|M|$ of $M$ is infinite,  then
\[
\lambda_1(M) \le \frac{{\mu^*}^2}{4},\ \ \  \mu^*:=\limsup_{r\rightarrow +\infty} \frac{\log |B(x_0,r)|}{r}.
\]
The following result may be viewed as a quantitative version of the theorem of Cheng-Yau.   

\begin{theorem}\label{th:Upper}
If\/ $\nu_*:=\liminf_{r\rightarrow +\infty} \frac{\log |B_r|}{\log r}$, then
\[
{\Lambda_*} \le \inf_{0<\delta<1}  \left[\frac{\log\left((\delta^{-\nu_*}-1)^{1/2}+\delta^{-\nu_*/2}\right)}{1-\delta}\right]^2.
\]
In particular,  we have 
\begin{enumerate}
\item[$(1)$] $\Lambda_*=0$ if\/ $\nu_*=0;$
\item[$(2)$]
$
 {\Lambda_*} \le \frac{\log\left((\nu_*^{\nu_*}-1)^{1/2}+\nu_*^{\nu_*/2}\right)}{1-\nu_*}\lesssim\nu_*\log\frac{1}{\nu_*}$\/ if\/   $0<\nu_*\ll1$;
\item[$(3)$]
${\Lambda_*} \le \left[\log\left((e-1)^{1/2}+e^{1/2}\right)\right] (1+\nu_*)^2\lesssim\nu_*^2$\/ if\/ $\nu_*\gg1$.
\end{enumerate}
\end{theorem}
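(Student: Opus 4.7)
The plan is to insert a radial test function $\phi(x)=f(\rho(x))$ into the Rayleigh characterisation of $\lambda_1(B_r)$ and exploit the relative volume $X:=V(r)/V(\delta r)$, where $V(t):=|B_t|$ and $\delta\in(0,1)$ is a free parameter. Because $|\nabla\rho|\le1$ a.e., the co-area formula gives
\[
\int_{B_r}h(\rho)\,dV=\int_0^r h(s)\,dV(s),\qquad \int_{B_r}|\nabla\phi|^2\,dV\le \int_0^r f'(s)^2\,dV(s),
\]
reducing everything to a one-dimensional Stieltjes estimate against the nondecreasing function $V$. I would take
\[
f(s)=\begin{cases}1,&0\le s\le\delta r,\\[2pt]\dfrac{\sinh(\alpha(r-s))}{\sinh(\alpha(1-\delta)r)},&\delta r\le s\le r,\end{cases}
\]
which is Lipschitz, matches $1$ at $s=\delta r$, and vanishes at $s=r$. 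Using $\cosh^2 t-\sinh^2 t=1$ on the annular piece produces the pointwise identity $f'(s)^2=\alpha^2 f(s)^2+\alpha^2/\sinh^2(\alpha(1-\delta)r)$, whence
\[
\lambda_1(B_r)\le \alpha^2\cdot\frac{J+(V(r)-V(\delta r))/\sinh^2(\alpha(1-\delta)r)}{V(\delta r)+J},\qquad J:=\int_{\delta r}^{r}f(s)^2\,dV(s).
\]
The key step is to tune $\alpha$ so that the two ``overhead'' terms balance: set $\sinh^2(\alpha(1-\delta)r)=X-1$. The $J$-term then cancels and one is left with $\lambda_1(B_r)\le\alpha^2$. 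Inverting via $\sinh^{-1}(y)=\log(y+\sqrt{1+y^2})$ yields
\[
r^2\lambda_1(B_r)\le\left[\frac{\log\!\bigl((X-1)^{1/2}+X^{1/2}\bigr)}{1-\delta}\right]^2.
\]

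Next I would translate the relative volume $X$ back to the hypothesis on $\nu_*$. The fact I need is
\[
\liminf_{r\to+\infty}\frac{V(r)}{V(\delta r)}\le \delta^{-\nu_*}\qquad\text{for every fixed }\delta\in(0,1).
\]
The proof is a standard iteration: if $V(r)/V(\delta r)\ge\delta^{-(\nu_*+\eta)}$ for all $r\ge R_0$, iterating $r\mapsto\delta r$ until the argument enters $[R_0,R_0/\delta]$ yields $\log V(r)\ge(\nu_*+\eta)\log r+O(1)$, contradicting $\nu_*=\liminf_r\log V(r)/\log r$. Taking $\liminf$ along a subsequence realising this bound, and using monotonicity of $X\mapsto\log(\sqrt{X-1}+\sqrt{X})$, I obtain
\[
\Lambda_*\le \left[\frac{\log\!\bigl((\delta^{-\nu_*}-1)^{1/2}+\delta^{-\nu_*/2}\bigr)}{1-\delta}\right]^2,
\]
after which the main inequality follows by taking $\inf_{\delta\in(0,1)}$.

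The three corollaries follow by picking $\delta$ wisely. For (1) any $\delta$ makes the bracket vanish, since $\delta^{0}=1$. For (2) I would pick $\delta=\nu_*$ and use the expansion $\nu_*^{-\nu_*}=1+\nu_*\log(1/\nu_*)+o(\nu_*\log(1/\nu_*))$ as $\nu_*\to 0^+$, which gives $\log\!\bigl(\sqrt{X-1}+\sqrt X\bigr)\asymp\sqrt{\nu_*\log(1/\nu_*)}$ and $(1-\delta)\to1$. For (3) I would pick $\delta=e^{-1/\nu_*}$: this freezes $\delta^{-\nu_*}=e$, so the log numerator becomes the constant $\log(\sqrt{e-1}+\sqrt e)$, while the elementary inequality $e^x\ge 1+x$ gives $(1-\delta)^{-1}\le 1+\nu_*$, producing the stated $(1+\nu_*)^2$ factor. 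The main technical points I anticipate are (i) justifying the layer-cake/co-area identity when $\rho$ is only Lipschitz (via the Federer co-area formula combined with $|\nabla\rho|\le 1$ a.e.) and (ii) verifying that the balancing tuning $\sinh^2(\beta)=X-1$ is admissible, i.e.\ $X\ge 1$, which is automatic from $B_{\delta r}\subset B_r$.
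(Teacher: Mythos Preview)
Your proposal is correct and follows essentially the paper's approach: the same $\sinh$-profile test function (so that $f'^2-\alpha^2 f^2$ is constant, which is exactly the content of Lemma~\ref{lm:J}) together with the same geometric iteration linking the ratio $|B_r|/|B_{\delta r}|$ to $\nu_*$. The only cosmetic difference is that the paper fixes $\alpha=\sqrt{\Lambda_*-\varepsilon}$ from the outset and reads off $|B_r|\ge\bigl(1+\mathrm{sh}^2(\sqrt{\Lambda_*-\varepsilon}(1-\delta))\bigr)|B_{\delta r}|$ before iterating along $r=\delta^{-k}$, whereas you tune $\alpha$ to the observed ratio $X=|B_r|/|B_{\delta r}|$ first and then prove $\liminf_r X\le\delta^{-\nu_*}$ as a separate lemma; for part~(3) the paper chooses $\delta=\nu_*/(1+\nu_*)$ rather than your $\delta=e^{-1/\nu_*}$, but both choices give the same $(1+\nu_*)^2$ bound.
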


In \cite{Brooksfinite},  Brooks proved that if $|M|<\infty$,  then
$\lambda_1^{ess}(M) \le \frac{{\alpha^*}^2}{4}$,  where
\[
\alpha^*:=\limsup_{r\rightarrow +\infty} \frac{-\log |M\setminus B_r|}{r}.
\]
We shall show the following 

\begin{theorem}\label{th:volume_exponential}
If\/ $|M|<\infty$,  then  
\begin{equation}\label{eq:volume_exponential}
\widetilde{\Lambda}_*:=\liminf_{r\rightarrow+\infty}\frac{-\log\lambda_1(B_r)}{r}\geq \alpha_*:=\liminf_{r\rightarrow+\infty}\frac{-\log|M\setminus B_r|}{r}.
\end{equation}
\end{theorem}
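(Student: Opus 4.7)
The plan is to build a linear tent function in $\rho$, feed it into the Rayleigh quotient for $\lambda_1(B_r)$ to get an upper bound in terms of $|M\setminus B_s|$ for an auxiliary level $s<r$, and then optimize the choice of $s$. Fix $\alpha<\alpha_*$; by definition of $\alpha_*$ there exists $r_0$ with $|M\setminus B_s|\le e^{-\alpha s}$ for every $s\ge r_0$, and since $|M|<\infty$ and $B_s\nearrow M$ we also have $|B_s|\ge |M|/2$ once $s$ is large enough. For $0<s<r$ set
\[
\phi_{r,s}:=\min\!\left\{1,\frac{(r-\rho)_+}{r-s}\right\},
\]
which is locally Lipschitz, supported in $\overline{B_r}$, identically $1$ on $B_s$, and, using the standing hypothesis $|\nabla\rho|\le 1$ a.e., satisfies $|\nabla\phi_{r,s}|\le 1/(r-s)$ almost everywhere.

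Testing the variational characterisation of $\lambda_1(B_r)$ against $\phi_{r,s}$ gives
\[
\lambda_1(B_r)\le \frac{\int_{B_r}|\nabla\phi_{r,s}|^2\,dV}{\int_{B_r}\phi_{r,s}^2\,dV}\le \frac{|B_r\setminus B_s|}{(r-s)^2\,|B_s|}\le \frac{|M\setminus B_s|}{(r-s)^2\,|B_s|}.
\]
It remains to choose $s=s(r)$ so that $s\to\infty$, $r-s\to\infty$, $(r-s)/r\to 0$, and $\log(r-s)/r\to 0$. A convenient choice is $s:=r-\sqrt{r}$. Taking $-\log$ of the inequality above and dividing by $r$ then yields, for $r$ large,
\[
\frac{-\log\lambda_1(B_r)}{r}\ge\frac{\alpha s+2\log(r-s)+\log|B_s|}{r}\ge \alpha-\frac{\alpha}{\sqrt{r}}+\frac{\log r+\log(|M|/2)}{r},
\]
which tends to $\alpha$ as $r\to\infty$. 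Consequently $\widetilde{\Lambda}_*\ge\alpha$, and letting $\alpha\nearrow\alpha_*$ finishes the argument.

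I do not anticipate a deep obstacle here; the scheme is essentially the Brooks test function adapted to the non-essential spectrum on the fixed domain $B_r$. The only point requiring a moment of care is the simultaneous control of the cutoff width $h(r):=r-s$ against the exponential decay $e^{-\alpha s}$ of $|M\setminus B_s|$: one needs $h(r)\to\infty$ so that $2\log h(r)$ is not harmful yet $h(r)=o(r)$ so that $\alpha h(r)/r\to 0$. Any slowly growing $h$ satisfying $\log h(r)=o(r)$ will do, and this freedom absorbs the issue.
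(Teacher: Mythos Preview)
Your argument is correct. Both you and the paper plug a cutoff of $\rho$ into the Rayleigh quotient for $\lambda_1(B_r)$ to relate the eigenvalue to the tail volume $|M\setminus B_s|$, but you proceed more directly and with a simpler test function. The paper uses the sinh-profile cutoff $\chi_k(t)=\mathrm{sh}(A_k(1-t))/\mathrm{sh}(A_k(1-\delta))$ (the minimiser of $\sup_t\{\chi'(t)^2-A_k^2\chi(t)^2\}$ from its technical lemma) and argues from the eigenvalue side: along a sequence $r_k$ realising $\widetilde{\Lambda}_*$, the lower bound $\lambda_1(B_{r_k})>e^{-(\widetilde{\Lambda}_*+\varepsilon)r_k}$ is converted into a \emph{lower} bound on $|M\setminus B_{\delta r_k}|$, from which one reads off $\alpha_*\le(\widetilde{\Lambda}_*+\varepsilon)/\delta$. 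You instead fix $\alpha<\alpha_*$, use the tail bound $|M\setminus B_s|\le e^{-\alpha s}$ valid for all large $s$, and the linear tent $\min\{1,(r-\rho)_+/(r-s)\}$; this yields an upper bound on $\lambda_1(B_r)$ for every large $r$, from which $\widetilde{\Lambda}_*\ge\alpha$ is immediate. The sinh optimisation buys nothing here since no sharp constant is at stake, and your choice $s=r-\sqrt{r}$ makes the bookkeeping trivial; the resulting proof is noticeably shorter.
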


Motivated by a result of Dodziuk-Pignataro-Randol-Sullivan \cite{DPRS},  we shall give examples in \S\,\ref{sec:eg} showing  that the inequalities ${\Lambda_*} \lesssim\nu_*^2$ for $\nu_*\gg1$  and $\widetilde{\Lambda}_*\geq\alpha_\ast$  are both sharp.  

\begin{problem}
Does $\widetilde{\Lambda}_*>0$ imply  $|M|<\infty$?
\end{problem}

We also provide new proofs of the theorems of Brooks mentioned above, in slightly more general forms (see \S\,\ref{sec:Brooks}).

\section{Proofs of Proposition \ref{prop:eigenvalue},  Theorem \ref{th:ess} and Corollary \ref{cor:ess}}

\begin{proof}[Proof of Proposition \ref{prop:eigenvalue}]
$(1)$ Let $\phi\in C^\infty_0(B_r)$ be fixed. It follows that
\begin{eqnarray*}
C\int_M\phi^2dV
&\leq& \int_M\phi^2\Delta\rho^2 = -\int_M\nabla\phi^2\cdot\nabla\rho^2\\
&\leq& 4\int_M\rho|\nabla\rho||\phi||\nabla\phi| \leq 4r\int_M|\phi||\nabla\phi|\\
&\leq& 4r\left(\int_M\phi^2dV\right)^{1/2}\left(\int_M|\nabla\phi|^2dV\right)^{1/2},
\end{eqnarray*}
i.e.,
\[
\frac{C^2}{16r^2}\int_M\phi^2dV\leq\int_M|\nabla\phi|^2dV.
\]
Thus $\lambda_1(B_r)\geq{C^2/16}$, which implies that
\[
\Lambda_*\geq\frac{C^2}{16}.
\]

On the other hand, let $\psi=\exp(\rho^2/2r^2)$. Clearly, $1\le \psi \le e^{1/2}$ and 
$$
\Delta \psi \ge \psi \cdot\frac{\Delta\rho^2}{2r^2} \ge \frac{C\psi}{2r^2}
$$ 
on $B_r$.  
By the following Caccioppoli-type inequality (cf. \cite{CX}, (2.4)):   
$$
\int_M \phi^2 |\nabla \psi|^2 dV +\frac1{1-\gamma} \int_M \phi^2 \psi \Delta\psi dV \le \frac1{\gamma(1-\gamma)}\int_M \psi^2 |\nabla \phi|^2 dV,\ \ \ 0<\gamma<1,
$$ 
we have 
$$
\int_M \phi^2 \psi \Delta\psi dV \le \frac1\gamma \int_M \psi^2 |\nabla \phi|^2 dV.
$$
Letting $\gamma\rightarrow 1-$, we obtain
$$
\int_M \phi^2 \psi \Delta\psi dV \le  \int_M \psi^2 |\nabla \phi|^2 dV.
$$
Thus
$$
\frac{C}{2er^2} \int_M \phi^2 dV \le \int_M |\nabla \phi|^2 dV, \ \ \ \forall\,\phi \in C^\infty_0(B_r),
$$
from which the assertion immediately follows.

$(2)$ For $\alpha>0$, we have
\begin{eqnarray*}
\Delta\rho^{-2\alpha}
&=& \alpha\left(4(\alpha+1)|\nabla\rho|^2-\Delta\rho^2\right)\rho^{-2\alpha-2}\leq\alpha\left(4(\alpha+1)-C\right)\rho^{-2\alpha-2}
\end{eqnarray*}
when $\rho\neq0$. It follows that if $0<\alpha<(C-4)/4$, then $\Delta\rho^{-2\alpha}\leq0$ for $\rho\neq0$. Let $\tau:[-\infty,0]\rightarrow[-1/2,0]$ be a smooth, convex and increasing function with $\tau\equiv-1/2$ when $-\infty\leq{t}\leq-1$ and $\tau(t)=t$ when $t\in[-1/4,0]$. Then $\tau(-\rho^{-2\alpha})$ is a non-negative subharmonic function on $M$.
\end{proof}

Recall that the capacity $\mathrm{cap}(K)$ of a compact set $K\subset M$ is given by
$$
\mathrm{cap}(K):=\inf \int_M |\nabla \psi|^2 dV,
$$
where the infimum is taken over all locally Lipschitz continuous functions $\psi$ on $M$ with a compact support such that $0\le \psi\le 1$ and $\psi|_K=1$.  The following criterion is of fundamental importance.

\begin{theorem}[cf. \cite{Grigoryan},  Theorem 5.1]\label{th:Capacity}
 $M$ is hyperbolic if and only if $\mathrm{cap}(K)>0$ for some/any compact set $K\subset M$.  
\end{theorem}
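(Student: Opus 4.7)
The plan is the standard equilibrium potential argument. Fix a compact $K\subset M$ with smooth boundary and nonempty interior, together with a precompact exhaustion $\{\Omega_j\}$ of $M$ with $K\Subset\Omega_j$. Let $u_j$ be the continuous function on $\overline{\Omega}_j$ that equals $1$ on $K$, vanishes on $\partial\Omega_j$, and is harmonic on $\Omega_j\setminus K$; extend it by $0$ outside $\Omega_j$. The Dirichlet principle identifies $u_j$ as the minimizer of $\int |\nabla\psi|^2\,dV$ over admissible $\psi$ whose support lies in $\overline{\Omega}_j$, and enlarging $\Omega_j$ only relaxes this restriction, so
\[
\mathrm{cap}(K)=\lim_{j\to\infty}\int_M|\nabla u_j|^2\,dV.
\]
By the maximum principle $\{u_j\}$ increases to a harmonic function $u$ on $M\setminus K$ with $0\le u\le 1$ and $u|_K\equiv 1$; extending by $1$ on $K$ gives a bounded superharmonic function on $M$.

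Suppose $M$ is parabolic. Then $-u$ is bounded-above and subharmonic on $M$, so the Liouville property equivalent to parabolicity (via an additive shift reducing to negative subharmonic functions) forces $u\equiv 1$. Since $u_j$ is harmonic on $\Omega_j\setminus K$ and $\partial K$ is smooth, Green's identity yields
\[
\int_M|\nabla u_j|^2\,dV=\int_{\partial K}(-\partial_\nu u_j)\,dS,
\]
where $\nu$ is the outward normal from $K$. The positive harmonic functions $v_j:=1-u_j$ on $\Omega_j\setminus K$ vanish on $\partial K$ and decrease to zero locally uniformly by monotone convergence, so boundary elliptic estimates yield $\partial_\nu v_j\to 0$ uniformly on $\partial K$, whence $\mathrm{cap}(K)=0$.

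Suppose instead $M$ is hyperbolic, so that $G_M(y,\cdot)=\lim_j G_{\Omega_j}(y,\cdot)$ is finite for $y\in\mathrm{int}(K)$. The Riesz-type representation $u_j(y)=\int_{\partial K}G_{\Omega_j}(y,z)\,d\mu_j(z)$ with capacitary measure $\mu_j:=-\partial_\nu u_j\cdot dS|_{\partial K}$ of total mass $\mu_j(\partial K)=\int|\nabla u_j|^2\,dV=\mathrm{cap}(K;\Omega_j)$ gives, for $y\in\mathrm{int}(K)$,
\[
1=u_j(y)\le \mathrm{cap}(K;\Omega_j)\cdot\sup_{z\in\partial K}G_{\Omega_j}(y,z).
\]
Passing to the limit yields $\mathrm{cap}(K)\ge 1/\sup_{\partial K}G_M(y,\cdot)>0$. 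The ``some/any'' formulation then follows: both positivity statements are equivalent to hyperbolicity of $M$, and the monotonicity $K_1\subset K_2\Rightarrow \mathrm{cap}(K_1)\le \mathrm{cap}(K_2)$ together with the ability to sandwich an arbitrary compact set with nonempty interior between smoothly bounded compacta transfers positivity between different choices of $K$.

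The main obstacle is the flux-vanishing step in the parabolic case: converting the locally uniform convergence $u_j\to 1$ into $\partial_\nu u_j\to 0$ on $\partial K$ requires uniform boundary elliptic estimates for the family $\{v_j\}$ of positive harmonic functions that vanish on $\partial K$. Smoothness of $\partial K$ together with a standard boundary Harnack or Schauder argument handles this cleanly; the remaining machinery (Dirichlet principle, Green's identity, Riesz representation) is entirely routine potential theory.
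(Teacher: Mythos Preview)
The paper does not prove this theorem at all: it is stated with the attribution ``cf.\ \cite{Grigoryan}, Theorem 5.1'' and used as a black box in the proofs of Theorem~\ref{th:ess} and Theorem~\ref{th:main_2}. So there is no proof in the paper to compare your proposal against.

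That said, your sketch is the standard equilibrium-potential argument and is essentially correct. A couple of points are worth tightening. First, in the parabolic direction you invoke ``the Liouville property equivalent to parabolicity''; strictly speaking the paper's definition of parabolicity is that every \emph{negative} subharmonic function is constant, which is a priori weaker than the statement for bounded-above subharmonic functions. Your parenthetical ``additive shift'' fixes this, but you should spell it out: $-u-\varepsilon$ is negative subharmonic for every $\varepsilon>0$, hence constant, so $u$ is constant. Second, the flux-vanishing step needs the uniform convergence $v_j\to 0$ on a full one-sided neighborhood of $\partial K$ (not just on compact subsets of $M\setminus K$) before Schauder estimates apply; this follows from Dini's theorem since $v_j$ is continuous up to $\partial K$, vanishes there, and decreases to zero, but it deserves a sentence. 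Finally, the ``some/any'' clause tacitly requires $K$ to have nonempty interior (a single point has zero capacity even on hyperbolic manifolds of dimension $\ge 2$); you acknowledge this in your last paragraph, and it matches the intended reading in \cite{Grigoryan}.
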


\begin{proof}[Proof of Theorem \ref{th:ess}]
Take $r_0\gg 1$ such that 
\begin{equation}\label{eq:essential2}
 \frac{\lambda_1^{ess}(M)}2 \int_M \phi^2 dV \le \int_M |\nabla \phi|^2 dV
\end{equation}
holds for any 
  locally Lipschitz,  compactly supported function  $\phi$ on $M\setminus B_{r_0}$. Let $\psi $ be a locally Lipschitz,  compactly supported function on $M$.  Choose a cut-off function $\eta:M\rightarrow [0,1]$ such that $\eta=1$ for $\rho\ge r_0+1$,  $\eta=0$ for $\rho\le r_0$ and $|\nabla \eta| \le 1$.  Apply \eqref{eq:essential2} with $\phi=\eta\psi$,  we have 
\begin{eqnarray}\label{eq:essential3}
\frac{\lambda_1^{ess}(M)}2 \int_{ \rho\ge r_0+1 } \psi^2  dV & \le &  \int_M |\nabla (\eta \psi)|^2  dV\nonumber\\
& \le & 2 \int_M |\nabla  \psi|^2  dV + 2 \int_{B_{r_0+1}} \psi^2  dV.
\end{eqnarray}
Since $M$ has infinite volume,  
 we may take $r_1 > r_0+1$ such that 
 $$
 \frac{\lambda_1^{ess}(M)}2 |B_{r_1}\setminus B_{r_0+1}| > 2 |B_{r_0+1} | + 2.
 $$
Thus if $\psi=1$ on $\overline{B}_{r_1}$,   then it follows from \eqref{eq:essential3} that
$$
\int_M |\nabla \psi|^2 dV >  1,
$$
so that 
$$
\mathrm{cap}(\overline{B}_{r_1}) \ge 1
$$
and $M$ is hyperbolic in view of Theorem \ref{th:Capacity}.
\end{proof}

\begin{proof}[Proof of Corollary \ref{cor:ess}]
The {\it only if}\/ part is trivial,  since near punctures,  $g_{\rm hyp}$ is equivalent to the hyperbolic metric of the punctured disc,  which is  $d-$bounded near the puncture.   For the {\it if}\/ part,  first observe that  $\lambda_1^{ess}(\widetilde{M})>0$,  in view of the proof of Theorem 1.4.A in Gromov \cite{Gromov91}.  Since quasi-isometry preserves the type (see \cite{Grigoryan},  Corollary 5.3),  so $\widetilde{M}$ is also parabolic.   By Theorem \ref{th:ess}, we conclude that $(\widetilde{M},g)$ has finite  volume,  so does $(M,g_{\rm hyp})$,  since quasi-isometry also preserves volume growth,  which in turn implies the conformal finiteness of $M$.  
\end{proof}

\section{Proof of Theorem \ref{th:main}}\label{sec:main}
We start with a technical lemma as follows. Given $A>0$, define
\begin{equation}\label{eq:functional_J}
J_\chi(t):=\chi'(t)^2-A^2\chi(t)^2.
\end{equation}

\begin{lemma}\label{lm:J}
Among all $C^1$ functions $\chi:[a,b]\rightarrow[0,+\infty)$ with $\chi(a)=0$ and $\chi(b)=1$, the functional
\[
\chi\mapsto\sup_{t\in[a,b]}J_\chi(t)
\]
acheives its minimum at
\[
\chi_0(t)=\frac{e^{A(t-a)}-e^{-A(t-a)}}{e^{A(b-a)}-e^{-A(b-a)}}=\frac{\mathrm{sh}(A(t-a))}{\mathrm{sh}(A(b-a))},
\]
with
\begin{equation}\label{eq:minimum_J}
J_{\chi_0}(t)\equiv\frac{4A^2}{\big(e^{A(b-a)}-e^{-A(b-a)}\big)^2}=\frac{A^2}{\mathrm{sh}^2(A(b-a))}.
\end{equation}
\end{lemma}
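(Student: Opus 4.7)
The plan is to first verify by direct computation that $\chi_0(t)=\mathrm{sh}(A(t-a))/\mathrm{sh}(A(b-a))$ yields $J_{\chi_0}\equiv A^2/\mathrm{sh}^2(A(b-a))$: since $\chi_0'(t)=A\cosh(A(t-a))/\mathrm{sh}(A(b-a))$, the hyperbolic identity $\cosh^2-\mathrm{sh}^2=1$ gives \eqref{eq:minimum_J} at once. The substance of the lemma is then the optimality assertion: for every admissible $\chi$,
\[
\sup_{t\in[a,b]} J_\chi(t)\;\geq\;\frac{A^2}{\mathrm{sh}^2(A(b-a))}.
\]

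My plan for the optimality is a contradiction argument. Suppose $M:=\sup_{t} J_\chi(t)$ satisfied $M<A^2/\mathrm{sh}^2(A(b-a))$. By the very definition of $J_\chi$, the pointwise bound $|\chi'(t)|\leq\sqrt{M+A^2\chi(t)^2}$ holds on $[a,b]$, whence
\[
b-a\;\geq\;\int_a^b\frac{|\chi'(t)|}{\sqrt{M+A^2\chi(t)^2}}\,dt.
\]
The chief obstacle is that $\chi$ is not assumed to be monotone, so a direct change of variable $y=\chi(t)$ is not available. I would circumvent this via the coarea formula: writing $N(y):=\#\chi^{-1}(y)$,
\[
\int_a^b\frac{|\chi'(t)|}{\sqrt{M+A^2\chi(t)^2}}\,dt\;=\;\int_0^{+\infty}\frac{N(y)}{\sqrt{M+A^2 y^2}}\,dy,
\]
and since $\chi$ is continuous with $\chi(a)=0$ and $\chi(b)=1$, the intermediate value theorem forces $N(y)\geq 1$ for every $y\in(0,1)$.

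Chaining these estimates and discarding the contribution from $y>1$ yields
\[
b-a\;\geq\;\int_0^1\frac{dy}{\sqrt{M+A^2 y^2}}\;=\;\frac{1}{A}\sinh^{-1}\!\!\left(\frac{A}{\sqrt{M}}\right),
\]
the last identity coming from the substitution $y=(\sqrt{M}/A)\,\mathrm{sh}(u)$. Rearranging gives $A/\sqrt{M}\leq\mathrm{sh}(A(b-a))$, i.e., $M\geq A^2/\mathrm{sh}^2(A(b-a))$, contradicting the hypothesis. Hence $\chi_0$ is a minimizer and the minimax value is exactly $A^2/\mathrm{sh}^2(A(b-a))$. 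No regularity subtleties arise in the coarea step, since $\chi\in C^1$ and only the lower bound $N(y)\geq 1$ is invoked; note also that the case $M=0$ is ruled out immediately by a Gronwall argument, which would otherwise force $\chi\equiv 0$ and contradict $\chi(b)=1$.
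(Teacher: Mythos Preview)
Your argument is correct, but it proceeds along a different line from the paper's. The paper argues by direct comparison with $\chi_0$: assuming $\sup J_\chi<\sup J_{\chi_0}$, it first checks that $\chi<\chi_0$ on a right neighborhood of $a$ (otherwise $\chi'(a)\ge\chi_0'(a)$ already gives $J_\chi(a)\ge J_{\chi_0}(a)$), lets $c$ be the first point where $\chi$ catches up to $\chi_0$, and applies Cauchy's mean value theorem on $[a,c]$ to produce $t_1$ with $\chi'(t_1)=\chi_0'(t_1)$; since $J_\chi(t_1)<J_{\chi_0}(t_1)$ this forces $\chi(t_1)>\chi_0(t_1)$, contradicting the definition of $c$. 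Your route instead extracts the pointwise differential inequality $|\chi'|\le\sqrt{M+A^2\chi^2}$, integrates the quotient, and lower-bounds via the area formula (Banach indicatrix) and the intermediate value theorem, arriving at $b-a\ge A^{-1}\sinh^{-1}(A/\sqrt{M})$, which is exactly the sharp bound. The paper's proof is slightly more elementary in that it uses only Cauchy's mean value theorem, whereas yours invokes the one-dimensional area formula; on the other hand, your approach never mentions $\chi_0$ in the optimality step and would adapt verbatim to functionals of the form $\chi'^2-V(\chi)$ for more general $V$, the extremal value emerging from the explicit integral $\int_0^1 dy/\sqrt{M+V(y)}$. One cosmetic remark: what you call the ``coarea formula'' here is really the area formula (change of variables with multiplicity) in equal dimensions; for $C^1$ maps of an interval this is standard and your use of it is sound.
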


\begin{proof}
A straightforward calculation immediately yields \eqref{eq:minimum_J}. Now suppose on the contrary that
\[
\sup_{t\in[a,b]}J_\chi(t)<\sup_{t\in[a,b]}J_{\chi_0}(t)
\]
for some $C^1$ function $\chi$ on $[a,b]$ with $\chi\geq0$, $\chi(a)=0$ and $\chi(b)=1$. First note that there exists some $\delta>0$ with
\[
\chi(t)<\chi_0(t),\ \ \ \forall\,a<{t}\leq{a+\delta},
\]
for otherwise $\chi'(a)\geq\chi_0'(a)>0$, so that
\[
\sup_{t\in[a,b]}J_\chi(t)\geq{J_\chi(a)}\geq\chi'(a)^2\geq\chi_0'(a)^2=J_{\chi_0}(a)=\sup_{t\in[a,b]}J_{\chi_0}(t),
\]
which is absurd. Set
\[
c:=\sup\{t\in[a,b]:\chi(s)<\chi_0(s),\ \forall\,s\in(a,t]\}.
\]
It follows that $c>a$, $\chi(c)=\chi_0(c)$ and $\chi(t)<\chi_0(t)$ for all $a<t<c$. Thus there exists some $t_1\in(a,c)$, according to Cauchy's intermediate value theorem, such that
\[
\frac{\chi'(t_1)}{\chi_0'(t_1)}=\frac{\chi(c)-\chi(a)}{\chi_0(c)-\chi_0(a)}=1.
\]
However,
\[
\chi'(t_1)^2-A^2\chi(t_1)^2\leq\sup_{t\in[a,b]}J_\chi(t)<\sup_{t\in[a,b]}J_{\chi_0}(t)=\chi_0'(t_1)^2-A^2\chi_0(t_1)^2,
\]
so that $\chi(t_1)>\chi_0(t_1)$, which is impossible.
\end{proof}

We shall prove  a slightly more general result as follows.  

 \begin{theorem}\label{th:main_2}
Let $t_0$ be the solution to \eqref{eq:equation_sh}. Suppose the following conditions hold:
\begin{enumerate}
\item[$(1)$] there exists a numerical constant $C_0>4t_0^2\approx18.624$ such that 
\begin{equation}\label{eq:growth2}
\lambda_1(B_{r}\setminus \overline{B}_{r/8}) \ge \frac{C_0}{r^2},\ \ \ \forall\,r\gg 1;
\end{equation}
\item[$(2)$] $\int_M\frac{dV}{1+\rho^2}=+\infty$.
\end{enumerate}
Then $M$ is hyperbolic.  
\end{theorem}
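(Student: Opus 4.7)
The plan is to argue by contradiction via the capacity criterion (Theorem \ref{th:Capacity}). Suppose $M$ is parabolic. Then $\mathrm{cap}(\overline B_{r_0}) = 0$ for the fixed $r_0$, so there exists a sequence $\{\psi_k\}$ of Lipschitz compactly supported functions with $\psi_k = 1$ on $\overline B_{r_0}$, $0 \le \psi_k \le 1$, and $\int_M |\nabla \psi_k|^2\,dV \to 0$. Taking $\psi_k$ to be the harmonic capacitors of $\overline B_{r_0}$ in $B_{R_k}$ for $R_k \to \infty$, the maximum principle yields moreover $\psi_k \nearrow 1$ pointwise on $M$.

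The core of the argument is to establish, from condition (1) and Lemma \ref{lm:J}, a weighted Hardy-type inequality
\[
\int_M \frac{\psi^2}{1+\rho^2}\,dV \;\le\; C_1(C_0)\int_M |\nabla\psi|^2\,dV \;+\; C_2(r_0),
\]
uniformly in compactly supported Lipschitz $\psi$ with $\psi = 1$ on $\overline B_{r_0}$ and $0 \le \psi \le 1$. Applied to the $\psi_k$ above, monotone convergence then forces
\[
\int_M \frac{dV}{1+\rho^2} \;=\; \lim_{k\to\infty} \int_M \frac{\psi_k^2}{1+\rho^2}\,dV \;\le\; C_2(r_0) < \infty,
\]
contradicting condition (2).

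To produce this Hardy bound I would work on a dyadic sequence of annuli $E_j := B_{r_j} \setminus \overline B_{r_j/8}$ with $r_j = 2^j r_0$, chosen so large that \eqref{eq:growth2} applies. On each $E_j$ set $A := \sqrt{C_0}/r_j$ and select two \emph{nested} sub-annuli of radial widths $\ell$ and $\ell'$ with $A\ell = t_0/4$ and $A\ell' = t_0$; both lie inside $E_j$ and so both inherit the eigenvalue lower bound $A^2$ from \eqref{eq:growth2}. On each sub-annulus I would decompose $\psi = \chi_0(\rho) + \varphi$, where $\chi_0$ is the radial $\mathrm{sh}$-bridging function of Lemma \ref{lm:J} matching the mean boundary values of $\psi$ on the two boundary components, and $\varphi$ vanishes on the boundary. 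Applying the eigenvalue inequality to $\varphi$ and using $|\nabla\chi_0|^2 - A^2\chi_0^2 \le J_{\chi_0} \equiv A^2/\mathrm{sh}^2(A\ell)$ (which follows since $|\nabla\rho|\le 1$) yields a quadratic estimate relating $\int_{E_j}|\nabla\psi|^2\,dV$ to $\int_{E_j}\psi^2\,dV/r_j^2$ modulo boundary contributions. The identity \eqref{eq:equation_sh} is exactly the balancing relation under which the \emph{weighted} sum of the two sub-annular estimates (with weights $1/(4\,\mathrm{sh}^2(t_0/4))$ and $4/\mathrm{sh}^2(t_0)$, whose sum is $1$) absorbs the boundary contributions into a telescoping sum across $j$; the strict inequality $C_0 > 4t_0^2$ supplies the positive gap that keeps $C_1(C_0)$ finite.

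The main obstacle will be the final bookkeeping: arranging the sub-annular widths and the bridging boundary values so that the boundary quadratics from adjacent $E_j, E_{j+1}$ cancel in the telescoping sum, and verifying that the resulting constant $C_1(C_0)$ in the Hardy inequality is finite precisely in the range $C_0 > 4t_0^2$, with a blow-up as $C_0 \searrow 4t_0^2$ coming from the vanishing of $1 - \bigl(1/(4\,\mathrm{sh}^2(t/4)) + 4/\mathrm{sh}^2(t)\bigr)$ at $t = t_0$.
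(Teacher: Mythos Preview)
Your global architecture is right and matches the paper's: derive a weighted Hardy-type inequality
\[
\int_{\rho\ge r_0}\frac{\psi^2}{\rho^2}\,dV \;\lesssim\; \int_M|\nabla\psi|^2\,dV + C(r_0)
\]
from the annular eigenvalue bound via dyadic annuli and the $\mathrm{sh}$-profile of Lemma~\ref{lm:J}, then combine with condition~(2) to force a positive capacity. You also correctly identify the role of \eqref{eq:equation_sh}: the quantity
\[
g(A)=1-\frac{1}{4\,\mathrm{sh}^2(A/4)}-\frac{4}{\mathrm{sh}^2(A)}
\]
is the gap that must be strictly positive, and $C_0>4t_0^2$ is precisely what makes $A>t_0$ and hence $g(A)>0$.

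However, your core local step is flawed. You propose to write $\psi=\chi_0(\rho)+\varphi$ with $\chi_0$ a radial function matching the \emph{mean} boundary values of $\psi$, and then claim $\varphi$ vanishes on the boundary so that the Dirichlet eigenvalue inequality applies to $\varphi$. But subtracting a function of $\rho$ alone cannot make a general $\psi$ vanish on the level sets $\{\rho=\mathrm{const}\}$; $\varphi$ will only have zero \emph{mean} there, which is not enough to invoke $\lambda_1$. The paper avoids this by using the \emph{product} $\phi\psi$ with $\phi=\chi(\rho/r)$, where $\chi$ is the $\mathrm{sh}$-profile on two transition intervals $[1/4,1/2]$ and $[1,2]$ and $\chi\equiv 1$ on the core $[1/2,1]$. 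Then $\phi\psi$ is genuinely supported in $B_{2r}\setminus\overline B_{r/4}$, so $\lambda_1$ applies directly; expanding $|\nabla(\phi\psi)|^2$ produces the $J_\chi(\rho/r)\psi^2$ terms on the transition zones and isolates $\int_{r/2\le\rho\le r}\psi^2$ on the core. With $A_k:=2^{-2k}\int_{2^{k-1}\le\rho\le 2^k}\psi^2$ this gives the three-term relation
\[
A_k\le \frac{A_{k-1}}{4\,\mathrm{sh}^2(A/4)}+\frac{4A_{k+1}}{\mathrm{sh}^2(A)}+C\int_{2^{k-2}\le\rho\le 2^{k+1}}|\nabla\psi|^2.
\]
Summing over $k$ is not a telescoping cancellation as you describe, but an \emph{absorption}: the shifted sums $\sum A_{k\pm1}$ differ from $\sum A_k$ only by boundary terms, and since their total coefficient is $1-g(A)<1$, one can move them to the left and obtain the Hardy bound with constant proportional to $1/g(A)$. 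That is where the blow-up as $C_0\searrow 4t_0^2$ comes from. Replacing your additive decomposition by this multiplicative cutoff is the missing ingredient; the rest of your outline then goes through essentially as in the paper.
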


\begin{proof}
 Let $\psi$ be any fixed locally Lipschitz,  compactly supported function on $M$.  Take a Lipschitz function $\chi:\mathbb{R}\rightarrow [0,1]$ such that $\chi(t)=1$ for $1/2\le t \le 1$ and $\phi=0$ for $t\ge 2$ or $t\le 1/4$. For $\phi:=\chi(\rho/r)$, we have
\begin{eqnarray}\label{eq:annuli_lower}
\int_M |\nabla(\phi\psi)|^2 dV
& \ge & \lambda_1(B_{2r}\setminus \overline{B}_{r/4}) \int_M \phi^2\psi^2 dV\nonumber\\
& \ge & \frac{C_0}{4r^2}\int_{r/4\leq\rho\leq{r/2}}\chi\left(\rho/r\right)^2\psi^2dV_g\nonumber\\
& & +\frac{C_0}{4r^2}\int_{r\leq\rho\leq{2r}}\chi\left(\rho/r\right)^2\psi^2dV_g\nonumber\\
& & +\frac{C_0}{4r^2} \int_{r/2\le \rho\le r} \psi^2 dV
\end{eqnarray}
for all $r\gg 1$.  On the other hand, for any $\gamma>0$, we have
\begin{eqnarray*}
\int_M |\nabla(\phi\psi)|^2 dV
& \le & (1+\gamma)\int_M \psi^2 |\nabla \phi|^2 dV + (1+1/\gamma) \int_M \phi^2 |\nabla \psi|^2 dV\\
& \le & \frac{1+\gamma}{r^2} \int_{r/4\le \rho\le r/2} \chi'(\rho/r)^2\psi^2 dV\nonumber\\
& & + \frac{1+\gamma}{r^2}\int_{r\le \rho\le 2r} \chi'(\rho/r)^2\psi^2 dV \\
& & + (1+1/\gamma) \int_{r/4\le \rho\le 2r} |\nabla \psi|^2 dV.
\end{eqnarray*}
This together with \eqref{eq:annuli_lower} yield
\begin{eqnarray}\label{eq:annuli_0}
\frac{C_0}{4r^2}\int_{r/2\le \rho\le r} \psi^2 dV
&\le & \frac{1+\gamma}{r^2} \int_{r/4\le \rho\le r/2} J_\chi(\rho/r)\psi^2 dV \nonumber\\
& & + \frac{1+\gamma}{r^2}\int_{r\le \rho\le 2r} J_\chi(\rho/r)\psi^2 dV \nonumber\\
& & + (1+1/\gamma) \int_{r/4\le \rho\le 2r} |\nabla \psi|^2 dV, 
\end{eqnarray}
where $J_\chi$ is the function defined in \eqref{eq:functional_J} with
\[
A:=\frac{1}{2}\left(\frac{C_0}{1+\gamma}\right)^{1/2}.
\]
Motivated by Lemma \ref{lm:J}, we set
\[
\chi(t)=
\begin{cases}
0,\ \ \ &t\leq1/4,\\
\chi_1(t),\ \ \ &1/4\leq{t}\leq1/2,\\
1,\ \ \ &1/2\leq{t}\leq1,\\
\chi_2(t),\ \ \ &1\leq{t}\leq2,\\
0,\ \ \ &t\geq2,
\end{cases}
\]
where
\[
\chi_1(t):=\frac{e^{A(t-1/4)}-e^{-A(t-1/4)}}{e^{A/4}-e^{-A/4}}\ \ \ \text{and}\ \ \ \chi_2(t):=\frac{e^{A(2-t)}-e^{-A(2-t)}}{e^{A}-e^{-A}}.
\]
It follows from \eqref{eq:minimum_J} that
\begin{equation}\label{eq:upper_J}
J_\chi(t)\leq
\begin{cases}
\frac{A^2}{\mathrm{sh}^2(A/4)},\ \ \ &1/4\leq{t}\leq1/2,\\
\frac{A^2}{\mathrm{sh}^2(A)},\ \ \ &1\leq{t}\leq2.
\end{cases}
\end{equation}
By \eqref{eq:annuli_0} and \eqref{eq:upper_J}, we obtain
\begin{eqnarray*}
\frac{1}{r^2}\int_{r/2\le \rho\le r} \psi^2 dV
&\le & \frac{1}{\mathrm{sh}^2(A/4)r^2} \int_{r/4\le \rho\le r/2} \psi^2 dV\\
& & + \frac{1}{\mathrm{sh}^2(A)r^2}\int_{r\le \rho\le 2r} \psi^2 dV \\
& & + \frac{4(1+1/\gamma)}{C_0} \int_{r/4\le \rho\le 2r} |\nabla \psi|^2 dV, 
\end{eqnarray*}
In particular, if we take $r=2^k$, then
\begin{eqnarray}\label{eq:annuli_1}
\frac{1}{2^{2k}}\int_{2^{k-1}\le \rho\le 2^k} \psi^2 dV
&\le & \frac{1}{4\,\mathrm{sh}^2(A/4)}\cdot\frac{1}{2^{2k-2}}\int_{2^{k-2}\le \rho\le 2^{k-1}} \psi^2 dV\nonumber\\
& & + \frac{4}{\mathrm{sh}^2(A)}\cdot\frac{1}{2^{2k+2}}\int_{2^k\le \rho\le 2^{k+1}} \psi^2 dV \nonumber\\
& & + \frac{4(1+1/\gamma)}{C_0} \int_{2^{k-2}\le \rho\le 2^{k+1}} |\nabla \psi|^2 dV.  
\end{eqnarray}
for all  integers $k\ge k_0\gg 1$. By setting
\[
A_k:=\frac{1}{2^{2k}}\int_{2^{k-1}\le \rho\le 2^k} \psi^2 dV,
\]
we may rewrite \eqref{eq:annuli_1} as
\[
A_k \leq \frac{A_{k-1}}{4\,\mathrm{sh}^2(A/4)}+\frac{4A_{k+1}}{\mathrm{sh}^2(A)}+\frac{4(1+1/\gamma)}{C_0}\int_{2^{k-2}\le \rho\le 2^{k+1}} |\nabla \psi|^2 dV.
\]
Take sum $\sum_{k=k_0}^\infty$,  we get
\begin{eqnarray*}
\sum^\infty_{k=k_0}A_k
&\leq & \frac{1}{4\,\mathrm{sh}^2(A/4)}\sum^\infty_{k=k_0}A_{k-1}+\frac{4}{\mathrm{sh}^2(A)}\sum^\infty_{k=k_0}A_{k+1}\\
& & +12(1+1/\gamma)\int_M |\nabla \psi|^2 dV\\
&\leq & \left(\frac{1}{4\,\mathrm{sh}^2(A/4)}+\frac{4}{\mathrm{sh}^2(A)}\right)\sum^\infty_{k=k_0}A_k+\frac{A_{k_0-1}}{4\,\mathrm{sh}^2(A/4)}\\
& & +\frac{12(1+1/\gamma)}{C_0}\int_M |\nabla \psi|^2 dV,
\end{eqnarray*}
i.e.,
\begin{equation}\label{eq:annuli_sum}
g(A)\sum^\infty_{k=k_0}A_k\le  \frac{A_{k_0-1}}{4\,\mathrm{sh}^2(A/4)}+\frac{12(1+1/\gamma)}{C_0}\int_M |\nabla \psi|^2 dV,
\end{equation}
where
\[
g(A):=1-\frac{1}{4\,\mathrm{sh}^2(A/4)}-\frac{4}{\mathrm{sh}^2(A)}.
\]
Note that $g(t)$ is strictly increasing when $t>0$ and $t=t_0$ is the unique zero of $g$. Moreover, if $C_0>4t_0^2$, then we may choose $0<\gamma\ll1$ so that
\[
A=\frac{1}{2}\left(\frac{C_0}{1+\gamma}\right)^{1/2}>t_0.
\]
Thus
\[
g(A)>g(t_0)=0.
\]

Finally, we assume that $\psi=1$ when $\rho\leq2^l$, where $l\gg{k_0}$. It follows that
\[
\sum^{+\infty}_{k=k_0}A_k\geq\sum^l_{k=k_0}\frac{|B_{2^k}\setminus B_{2^{k-1}}|}{2^{2k}}.
\]
Clearly, the second condition in the theorem is equivalent to
\[
\sum^{+\infty}_{k=0}\frac{|B_{2^k}\setminus B_{2^{k-1}}|}{2^{2k}}=+\infty.
\]
It follows that if $l\gg{k_0}$, then 
\[
g(A)\sum^l_{k=k_0}\frac{|B_{2^k}\setminus B_{2^{k-1}}|}{2^{2k}} - \frac{A_{k_0-1}}{4\,\mathrm{sh}^2(A/4)}  > 1.
\]
These together with \eqref{eq:annuli_sum} give
$$
\int_M |\nabla \psi|^2 dV >  \frac{C_0\gamma}{12(1+\gamma)}
$$
for all locally Lipschitz, compactly supported function $\psi$ on $M$ with $\psi=1$ on $B_{2^l}$, which implies
$$
\mathrm{cap}(\overline{B}_{2^l}) \ge \frac{C_0\gamma}{12(1+\gamma)}.
$$
Thus $M$ is hyperbolic in view of Theorem \ref{th:Capacity}.
\end{proof}

\begin{corollary}\label{cor:main_1}
Let $t_0$ be the solution to \eqref{eq:equation_sh}. Suppose the following conditions hold:
\begin{enumerate}
\item[$(1)$] there exists a numerical constant $C_0>4t_0^2\approx18.623$ such that \eqref{eq:growth2} hold.

\item[$(2)$] $\int^{+\infty}_1\frac{v(r)}{r^3}dr=+\infty$, where $v(r):=|B_r|=|\{\rho<r\}|$.
\end{enumerate}
Then $M$ is hyperbolic.  
\end{corollary}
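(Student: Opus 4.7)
The plan is to deduce Corollary \ref{cor:main_1} directly from Theorem \ref{th:main_2}. Since hypothesis $(1)$ is identical in both statements, the only thing to verify is that
\[
\int_1^{+\infty}\frac{v(r)}{r^3}\,dr=+\infty
\]
forces hypothesis $(2)$ of Theorem \ref{th:main_2}, namely $\int_M dV/(1+\rho^2)=+\infty$.

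To pass from the integral over $M$ to a one-variable integral, I would rewrite
\[
\int_M\frac{dV}{1+\rho^2}=\int_0^{+\infty}\frac{dv(r)}{1+r^2},
\]
which holds in the Stieltjes sense because $v(r)=|\{\rho<r\}|$ is the distribution function of $\rho$ (this is just the layer-cake formula applied to the nonnegative function $(1+\rho^2)^{-1}$ composed with $\rho$). An integration by parts then gives, for every $R>0$,
\[
\int_0^R\frac{dv(r)}{1+r^2}=\frac{v(R)}{1+R^2}+\int_0^R\frac{2r\,v(r)}{(1+r^2)^2}\,dr,
\]
with both right-hand terms non-negative (recall $v(0)=0$).

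Finally, since $\frac{2r}{(1+r^2)^2}\geq\frac{1}{2r^3}$ for all $r\geq 1$, the divergence of $\int_1^{+\infty}v(r)/r^3\,dr$ forces the integral on the right above to diverge, whence $\int_M dV/(1+\rho^2)=+\infty$. Hypothesis $(2)$ of Theorem \ref{th:main_2} is thus verified, and the hyperbolicity of $M$ follows immediately. There is no real obstacle: the argument is a routine reduction, the only care being the (standard) justification of the coarea/Stieltjes identity and the integration by parts. As a sanity check, a fully equivalent approach is to decompose $M\setminus B_1$ into dyadic shells $S_k=\{2^{k-1}\leq\rho\leq 2^k\}$, bound $1+\rho^2\lesssim 2^{2k}$ on $S_k$, and apply Abel summation together with the elementary equivalence $\sum_{k\geq 1}v(2^k)/2^{2k}=+\infty \iff \int_1^{+\infty}v(r)/r^3\,dr=+\infty$ that follows from monotonicity of $v$.
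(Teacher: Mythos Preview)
Your proposal is correct and follows essentially the same route as the paper: reduce to Theorem~\ref{th:main_2} by showing that the volume condition $\int_1^{+\infty} v(r)/r^3\,dr=+\infty$ implies $\int_M dV/(1+\rho^2)=+\infty$, via the identity $\int_M dV/(1+\rho^2)=\int_0^\infty dv(r)/(1+r^2)$ and an integration by parts. The paper writes this identity using the coarea formula while you use the Stieltjes/layer-cake formulation; these are equivalent, and your version is arguably a bit more elementary since it does not require invoking coarea for a merely Lipschitz $\rho$. You also make the final comparison $\frac{2r}{(1+r^2)^2}\ge \frac{1}{2r^3}$ explicit, which the paper leaves to the reader.
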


\begin{proof}
By the coarea formula, we have
\[
v(r)=\int^r_0\left(\int_{\{\rho=t\}}\frac{1}{|\nabla\rho|}\right)dt,\ \ \ v'(r)=\int_{\{\rho=r\}}\frac{1}{|\nabla\rho|},
\]
and
\[
\int_M\frac{dV}{1+\rho^2}=\int^{+\infty}_0\frac{v'(r)}{1+r^2}dr=\left.\frac{v(r)}{1+r^2}\right|^{+\infty}_0+\int^{+\infty}_0\frac{2rv'(r)}{(1+r^2)^2}dr.
\]
Thus Theorem \ref{th:main_2} applies.
\end{proof}

\begin{proof}[Proof of Theorem \ref{th:main}]
In view of Theorem \ref{th:main_2},  it suffices to verify the following lemma.
\end{proof}

\begin{lemma}\label{lm:infinity}
Suppose there exists a numerical constant $C_1> 4(\log(2+\sqrt{3}))^2\approx6.938$ such that 
$$
\lambda_1(B_r) \ge C_1/r^2,\ \ \ \forall\,r\gg 1.
$$
Then 
\[
\int_M\frac{dV}{1+\rho^2}=+\infty.
\]
\end{lemma}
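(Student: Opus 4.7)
The plan is to show that the hypothesis forces the dyadic volumes $|B_{2^k r_0}|$ to grow faster than $4^k$, which will immediately yield $\int_M dV/(1+\rho^2)=+\infty$. The key will be to establish a sharp ``doubling'' inequality $|B_r|\geq\alpha\,|B_{r/2}|$ with $\alpha>4$ for all $r\gg 1$.

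I would set $A:=2\log(2+\sqrt{3})$, so that (using $\cosh(\log(2+\sqrt{3}))=2$) one has $\mathrm{sh}(A/2)=\sqrt{3}$, and the hypothesis $C_1>4\log^2(2+\sqrt{3})$ becomes precisely $A^2<C_1$. Next I would define the Lipschitz cut-off $\chi\colon[0,\infty)\to[0,1]$ by $\chi\equiv 1$ on $[0,1/2]$, $\chi(t)=\mathrm{sh}(A(1-t))/\mathrm{sh}(A/2)$ on $[1/2,1]$, and $\chi\equiv 0$ on $[1,\infty)$. This is the reflected extremal profile coming from Lemma \ref{lm:J}, and a direct computation using $\cosh^2-\mathrm{sh}^2=1$ yields the crucial pointwise identity
$$
\chi'(t)^2=A^2\chi(t)^2+\frac{A^2}{\mathrm{sh}^2(A/2)}\quad\text{on }(1/2,1).
$$

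Then I would apply $\lambda_1(B_r)\geq C_1/r^2$ to the test function $\phi_r(x):=\chi(\rho(x)/r)$, which is Lipschitz with compact support in $\overline{B_r}$. Using $|\nabla\rho|\leq 1$ together with the identity above, the Rayleigh quotient inequality reduces to
$$
\frac{C_1}{r^2}\bigl(|B_{r/2}|+I\bigr)\leq\frac{1}{r^2}\left(A^2 I+\frac{A^2}{\mathrm{sh}^2(A/2)}|B_r\setminus B_{r/2}|\right),
$$
where $I:=\int_{r/2\leq\rho<r}\chi(\rho/r)^2\,dV$. Since $C_1>A^2$, the non-negative term $(C_1-A^2)I$ drops out, leaving $|B_r|\geq\alpha\,|B_{r/2}|$ with $\alpha:=1+C_1\,\mathrm{sh}^2(A/2)/A^2=1+3C_1/(4\log^2(2+\sqrt{3}))$. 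The hypothesis $C_1>4\log^2(2+\sqrt{3})$ then translates exactly into $\alpha>4$.

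Iterating the doubling bound from some $r_0\gg 1$ gives $|B_{2^k r_0}|\geq\alpha^k|B_{r_0}|$, and therefore
$$
\int_M\frac{dV}{1+\rho^2}\gtrsim\sum_{k\geq 0}\frac{|B_{2^{k+1}r_0}\setminus B_{2^k r_0}|}{4^{k+1}r_0^2}\gtrsim\sum_{k\geq 0}(\alpha/4)^k=+\infty,
$$
as required. The delicate step is the choice of the profile $\chi$: the identity $(\chi')^2-A^2\chi^2\equiv\mathrm{const}$ on the transition annulus is what enables the clean cancellation of the $\int\chi^2\,dV$ terms, and it is the reason the sharp threshold is precisely $4\log^2(2+\sqrt{3})$. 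Any cruder cut-off (e.g.\ piecewise linear) would yield a strictly worse constant, and without the $\alpha>4$ threshold the dyadic volume chain fails to force $\int dV/(1+\rho^2)=+\infty$.
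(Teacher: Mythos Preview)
Your proof is correct and follows essentially the same route as the paper: both arguments use the $\mathrm{sh}$-profile cut-off of Lemma~\ref{lm:J} as a test function against the eigenvalue lower bound to produce a dyadic doubling inequality $|B_{2r}|\ge \alpha|B_r|$ with $\alpha>4$, then iterate and sum over dyadic shells. The only substantive difference is the choice of the parameter $A$ in the profile: the paper takes $A=\sqrt{C_1}/2$, which makes $J_\chi\equiv\text{const}$ and yields the doubling factor $1+\mathrm{sh}^2(\sqrt{C_1}/2)$ directly, whereas you fix $A=2\log(2+\sqrt 3)$ at the threshold value and then discard the non-negative remainder $(C_1-A^2)I$; this gives a slightly smaller doubling factor $1+3C_1/A^2$ but makes the criterion $\alpha>4$ transparent from the outset. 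Either choice recovers the same sharp threshold $C_1>4(\log(2+\sqrt 3))^2$.
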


\begin{proof}
It suffices to verify
$$
\sum^{+\infty}_{k=1}\frac{|B_{2^k}\setminus B_{2^{k-1}}|}{2^{2k}}=+\infty.
$$
Let $\chi:\mathbb R\rightarrow [0,1]$ be a cut-off function such that $\chi|_{(-\infty,1]}=1$,  $\chi|_{[2,+\infty)} = 0$ and
\[
\chi(t)=\frac{e^{\sqrt{C_1}(2-t)/2}-e^{-\sqrt{C_1}(2-t)/2}}{e^{\sqrt{C_1}/2}-e^{-\sqrt{C_1}/2}},\ \ \ t\in[1,2].
\]
Set $\phi=\chi(\rho/r)$. Then we have
\begin{eqnarray}
\int_M |\nabla \phi|^2 dV  & \ge & \lambda_1(B_{2r}) \int_M \phi^2 dV\nonumber\\
& \ge & \frac{C_1}{4 r^2} \cdot |B_r|+\frac{C_1}{4r^2}\int_{r\leq\rho\leq2r}\chi(\rho/r)^2dV\label{eq:inf_1}
\end{eqnarray}
for all $r\gg 1$.  On the other hand,  since $|\nabla \rho|\le 1$,  we have
\begin{equation}\label{eq:inf_2}
\int_M |\nabla \phi|^2 dV \le \frac{1}{r^2}\int_{r\leq\rho\leq2r}\chi'(\rho/r)^2dV.
\end{equation}
Thus
\[
\frac{C_1}{4}|B_r|\leq\int_{r\leq\rho\leq2r}J_\chi(\rho/r)dV_g,
\]
where $J_\chi$ is the function given by \eqref{eq:functional_J} with $A=\sqrt{C_1}/2$. By \eqref{eq:minimum_J}, we have
\[
J_\chi(t)\equiv\frac{C_1}{(e^{\sqrt{C_1}/2}-e^{-\sqrt{C_1}/2})^2}=\frac{C_1}{4\,\mathrm{sh}^2(\sqrt{C_1}/2)},
\]
so that
\[
|B_r|\leq\frac{|B_{2r}|-|B_r|}{\mathrm{sh}^2(\sqrt{C_1}/2)},
\]
i.e.,
\[
|B_{2r}|\geq\left(1+\frac{1}{\mathrm{sh}^2(\sqrt{C_1}/2)}\right)|B_r|=:C_2|B_r|.
\]
In particular, we have
\[
|B_{2^k}|\geq{C_2^{k-k_0}}|B_{2^k_0}|,
\]
for all $k\geq{k_0}\gg1$, so that
\[
|B_{2^k}\setminus B_{2^{k-1}}|\geq\left(1-\frac{1}{C_2}\right)|B_{2^k}|\geq\left(1-\frac{1}{C_2}\right){C_2^{k-k_0}}|B_{2^k_0}|.
\]
Thus
\[
\sum^{+\infty}_{k=1}\frac{|B_{2^k}\setminus B_{2^{k-1}}|}{2^{2k}}=+\infty
\]
provided $C_2>4$, i.e., $C_1>4(\log(2+\sqrt{3}))^2$.
\end{proof}

\section{Proofs of Theorem \ref{th:Upper} and Theorem \ref{th:volume_exponential}}

\begin{proof}[Proof of Theorem \ref{th:Upper}]

For $0<\varepsilon\ll 1$, we take $r_\varepsilon\gg 1$ such that 
$$
\lambda_1(B_r)\ge \frac{\Lambda_*-\varepsilon}{r^2}, \ \ \ r\ge r_\varepsilon. 
$$
Let $r\ge r_\varepsilon$ and $0<\delta<1$. Take a cut-off function $\chi:\mathbb{R}\rightarrow[0,1]$ such that $\chi|_{(-\infty,\delta]}=1$,  $\chi|_{[1,+\infty)}=0$ and
\[
\chi(t):=\frac{e^{\sqrt{\Lambda_*-\varepsilon}(1-t)}-e^{-\sqrt{\Lambda_*-\varepsilon}(1-t)}}{e^{\sqrt{\Lambda_*-\varepsilon}(1-\delta)}-e^{-\sqrt{\Lambda_*-\varepsilon}(1-\delta)}},\ \ \ t\in[\delta,1].
\]
Set $\phi=\chi(\rho/r)$. Then we have
\begin{eqnarray*}
\frac{\Lambda_*-\varepsilon}{r^2}\,|B_{\delta{r}}|
&=& \frac{\Lambda_*-\varepsilon}{r^2} \int_M \phi^2 dV - \frac{\Lambda_*-\varepsilon}{r^2} \int_{\rho\geq\delta{r}} \phi^2 dV\\
&\leq& \int_M |\nabla\phi|^2 dV - \frac{\Lambda_*-\varepsilon}{r^2} \int_{\rho\geq\delta{r}} \phi^2 dV\\
&\leq& \frac{1}{r^2}\int_{\delta{r}\leq\rho\leq{r}} \left(\chi'(\rho/r)^2 - (\Lambda_*-\varepsilon)\chi(\rho/r)^2\right) dV\\
&\leq& \frac{\Lambda_*-\varepsilon}{r^2\mathrm{sh}^2\left(\sqrt{\Lambda_*-\varepsilon}(1-\delta)\right)}\big(|B_r| - |B_{\delta{r}}|\big),
\end{eqnarray*}
in view of Lemma \ref{lm:J}. Namely,
$$
|B_r| \ge \left( 1+ \mathrm{sh}^2\big(\sqrt{\Lambda_*-\varepsilon}(1-\delta)\big)\right) |B_{\delta r}|.  
$$
In particular, if $k\ge k_{\varepsilon,\delta}\gg 1$, then
$$
|B_{\delta^{-k}}| \ge \left( 1+ \mathrm{sh}^2\big(\sqrt{\Lambda_*-\varepsilon}(1-\delta)\big)\right) ^{k-k_{\varepsilon,\delta}} \left|B_{\delta^{-k_{\varepsilon,\delta}}}\right|.
$$
Since $|B_r|\geq|B_{\delta^{-k}}|$ and $\log{r}\leq-(k+1)\log\delta$ whenever $\delta^{-k}\leq{r}\leq\delta^{-k-1}$, we have
$$
\nu_* \geq \liminf_{k\rightarrow +\infty} \frac{\log |B_{\delta^{-k}}|}{-(k+1)\log \delta}\ge  \frac{ \log \left( 1+ \mathrm{sh}^2\big(\sqrt{\Lambda_*-\varepsilon}(1-\delta)\big)\right)}{-\log \delta}.
$$
Thus
$$
\Lambda_*-\varepsilon \le \left(\frac{\log\left((\delta^{-\nu_*}-1)^{1/2}+\delta^{-\nu_*/2}\right)}{1-\delta}\right)^2,\ \ \ \forall\,\delta\in(0,1).
$$
Since $\varepsilon$ can be arbitrarily small, the first assertion immediately follows, which in turn immediately implies that $\Lambda_*=0$ if $\nu_*=0$. To verify (2) and (3), it suffices to take $\delta=\nu_*/(1+\nu_*)$ and $\delta=\nu_*$, respectively.
\end{proof}

\begin{proof}[Proof of Theorem \ref{th:volume_exponential}]
By definition, there exists a sequence $\{r_k\}$ with $\lim_{k\rightarrow+\infty}r_k=+\infty$, such that $\lambda_1(B_{r_k})>e^{-(\beta+\varepsilon)r_k}$ for some $0<\varepsilon\ll1$. Again, for $k\geq1$ and $0<\delta<1$, we take a cut-off function $\chi_k:\mathbb{R}\rightarrow[0,1]$ such that $\chi_k|_{(-\infty,\delta]}=1$,  $\chi_k|_{[1,+\infty)}=0$ and
\[
\chi_k(t):=\frac{e^{A_k(1-t)}-e^{-A_k(1-t)}}{e^{A_k(1-\delta)}-e^{-A_k(1-\delta)}},\ \ \ t\in[\delta,1],
\]
where
\[
A_k=\frac{r_k}{e^{(\beta+\varepsilon)r_k/2}}.
\]
Set $\phi_k=\chi_k(\rho/r_k)$. Then
\begin{eqnarray*}
e^{-(\beta+\varepsilon)r_k}\,|B_{\delta{r_k}}|
&=& e^{-(\beta+\varepsilon)r_k} \int_M \phi_k^2 dV - e^{-(\beta+\varepsilon)r_k}\int_{\rho\geq\delta{r_k}} \phi_k^2 dV\\
&\leq& \int_M |\nabla\phi_k|^2 dV - e^{-(\beta+\varepsilon)r_k}\int_{\rho\geq\delta{r_k}} \phi_k^2 dV\\
&\leq& \frac{1}{r_k^2}\int_{\delta{r_k}\leq\rho\leq{r_k}} \left(\chi_k'(\rho/r_k)^2 - A_k^2\chi_k(\rho/r_k)^2\right) dV\\
&\leq& \frac{A_k^2}{r_k^2\,\mathrm{sh}^2\left(A_k(1-\delta)\right)}|B_{r_k}\setminus B_{\delta{r_k}}|\\
&\leq& \frac{A_k^2}{r_k^2\,\mathrm{sh}^2\left(A_k(1-\delta)\right)}|M\setminus B_{\delta{r_k}}|.
\end{eqnarray*}
in view of Lemma \ref{lm:J}. That is,
\[
|M|\geq\left(1+\mathrm{sh}^2\left(A_k(1-\delta)\right)\right)|B_{\delta{r_k}}|,
\]
which is equivalent to
\[
|M\setminus{B_{\delta{r_k}}}|\geq\frac{\mathrm{sh}^2\left(A_k(1-\delta)\right)}{1+\mathrm{sh}^2\left(A_k(1-\delta)\right)}|M|.
\]
Since $\mathrm{sh}^2\left(A_k(1-\delta)\right)\sim{A_k^2}(1-\delta)^2$ as $k\rightarrow+\infty$, we have
\[
\alpha\leq\lim_{k\rightarrow\infty}\frac{-\log|M\setminus B_{\delta{r_k}}|}{\delta{r_k}}=\frac{\beta+\varepsilon}{\delta}.
\]
Letting $\delta\rightarrow1-$ and $\varepsilon\rightarrow0+$, we conclude that $\beta\geq\alpha$.
\end{proof}

\section{New proofs of Brooks' theorems}\label{sec:Brooks}

In this section,  we provide alternative proofs for Brooks' theorems, in slightly more general settings.   

\begin{theorem}\label{th:BrooksNew}
$$
\lambda_1(M) \le \frac{\mu_*^2}{4},\ \ \ \mu_*:=\liminf_{r\rightarrow +\infty} \frac{\log |B_r|}r.
$$
\end{theorem}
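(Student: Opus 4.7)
The plan is to produce, for each $\alpha>\mu_*/2$, a sequence of locally Lipschitz, compactly supported test functions $\phi_k$ whose Rayleigh quotients tend to $\alpha^2$; by the variational characterization of $\lambda_1(M)$ this will yield $\lambda_1(M)\le\alpha^2$, and letting $\alpha\downarrow\mu_*/2$ will give the claimed bound. Note that this approach requires no hypothesis on $|M|$, so the generalization from Brooks' limsup to liminf is built in.

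Fix $\alpha>\mu_*/2$ and choose $\varepsilon>0$ with $\mu_*+\varepsilon<2\alpha$. By definition of $\mu_*$ as a liminf, there exists a sequence $r_k\to+\infty$ with $v(r_k):=|B_{r_k}|\le e^{(\mu_*+\varepsilon)r_k}$. I would take the truncated exponential
\[
\phi_k\ :=\ \bigl(e^{-\alpha\rho}-e^{-\alpha r_k}\bigr)_+,
\]
which is locally Lipschitz and supported in $\overline{B}_{r_k}$. Since $|\nabla\rho|\le 1$ a.e., the numerator of the Rayleigh quotient is controlled by $\int_M|\nabla\phi_k|^2\,dV\le\alpha^2 W_k$, where $W_k:=\int_{B_{r_k}}e^{-2\alpha\rho}\,dV$. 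For the denominator, expanding the square produces the cross term $-2e^{-\alpha r_k}\int_{B_{r_k}}e^{-\alpha\rho}\,dV$ and the constant term $e^{-2\alpha r_k}v(r_k)$. Applying Cauchy--Schwarz in the form $\int_{B_{r_k}}e^{-\alpha\rho}\,dV\le (v(r_k)\,W_k)^{1/2}$ packages everything into
\[
\int_M\phi_k^2\,dV\ \ge\ \bigl(W_k^{1/2}-T_k^{1/2}\bigr)^2,\qquad T_k:=e^{-2\alpha r_k}v(r_k),
\]
so that the Rayleigh quotient satisfies $R(\phi_k)\le\alpha^2\bigl(1-\sqrt{T_k/W_k}\bigr)^{-2}$.

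The only delicate point is that the liminf gives no control on $v(r)$ outside the subsequence $\{r_k\}$, and even along this subsequence the bound $v(r_k)\le e^{(\mu_*+\varepsilon)r_k}$ may be enormous. The purpose of truncating exactly at $\rho=r_k$, combined with the Cauchy--Schwarz estimate above, is to reduce the entire error to the single quantity $T_k$; by the choice $2\alpha>\mu_*+\varepsilon$ we get $T_k\le e^{(\mu_*+\varepsilon-2\alpha)r_k}\to 0$ along $\{r_k\}$, and since $W_k$ is bounded below by $\int_{B_1}e^{-2\alpha\rho}\,dV>0$, the ratio $T_k/W_k$ tends to $0$. Hence $R(\phi_k)\to\alpha^2$, which completes the argument.
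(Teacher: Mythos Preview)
Your argument is correct. The choice of the truncated exponential $\phi_k=(e^{-\alpha\rho}-e^{-\alpha r_k})_+$ together with the Cauchy--Schwarz bound $\int_{B_{r_k}}e^{-\alpha\rho}\,dV\le (v(r_k)W_k)^{1/2}$ is a clean way to absorb the cross term, and the resulting estimate $R(\phi_k)\le\alpha^2(1-\sqrt{T_k/W_k})^{-2}$ does the job once $T_k\to 0$. One cosmetic point: $B_1$ may be empty in this generality, so the lower bound for $W_k$ should read $W_k\ge\int_{B_{r_0}}e^{-2\alpha\rho}\,dV>0$ for any fixed $r_0$ with $B_{r_0}\neq\emptyset$; this changes nothing in the argument.

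The paper's proof is organized differently. Instead of computing a Rayleigh quotient directly, it first applies the Poincar\'e inequality to the \emph{conjugated} function $e^{-\lambda\rho}\phi$ for $\lambda<\sqrt{\lambda_1(M)}$, which after the triangle inequality yields the weighted estimate $(\sqrt{\lambda_1(M)}-\lambda)\,\|e^{-\lambda\rho}\phi\|\le\|e^{-\lambda\rho}\nabla\phi\|$. Testing this with a standard cutoff $\phi=\eta_r$ produces a volume growth lower bound $|B_r|\gtrsim e^{2(1-\varepsilon)\lambda r}|B_{\varepsilon r}|$, and the conclusion follows by contradiction with the subsequence bound $|B_{r_k}|\le e^{2\alpha\sqrt{\lambda_1(M)}\,r_k}$. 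Both proofs ultimately rest on the same exponential weight $e^{-\alpha\rho}$, but yours is a direct variational computation with a sharp truncation and a Cauchy--Schwarz trick, whereas the paper's is a contradiction argument via a conjugation identity and a soft cutoff. Your route is arguably more elementary and makes the role of the $\liminf$ subsequence more transparent; the paper's route has the advantage that the weighted inequality $(\sqrt{\lambda_1(M)}-\lambda)\|e^{-\lambda\rho}\phi\|\le\|e^{-\lambda\rho}\nabla\phi\|$ is itself a reusable intermediate result.
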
 

\begin{proof}
Let $\phi$ be a
  locally Lipschitz,  compactly supported function   on $M$.   For any  $0<\lambda< \sqrt{\lambda_1(M)} $,  we have 
$$
\sqrt{\lambda_1(M)} \| e^{-\lambda\rho} \phi\|    \le   \| \nabla(e^{-\lambda \rho} \phi) \| 
 \le  \lambda \| e^{-\lambda \rho}\phi\| + \| e^{-\lambda\rho} \nabla \phi\|,
$$
i.e.,  
\begin{equation}\label{eq:B1}
\beta  \| e^{-\lambda\rho} \phi\| \le \| e^{-\lambda\rho} \nabla \phi\|,  \ \ \ \beta:=\sqrt{\lambda_1(M)}-\lambda. 
\end{equation}
Given $r> 1$,  choose a cut-off function $\eta_r:M\rightarrow [0,1]$ such that $\eta_r=1$ for $\rho\le r-1$,  $\eta_r=0$ for $\rho\ge r$ and $|\nabla \eta_r| \le 1$.  Consider the test function $\phi=e^{\lambda r} \eta_r$.  We have 
$$
\| e^{-\lambda\rho} \nabla \phi\|^2\le e^2  |B_r\setminus B_{r-1}|,
$$ 
while for any $0<\varepsilon<1$ and $r\ge \frac1{1-\varepsilon}$,  
$$
\| e^{-\lambda\rho} \phi\|^2 \ge \int_{\rho\le \varepsilon r} e^{2\lambda r-2 \lambda \rho} dV\ge e^{2(1-\varepsilon)\lambda r} |B_{\varepsilon r}|.
$$
These together with \eqref{eq:B1} yield
\begin{equation}\label{eq:B_2}
|B_{r_k}| \ge (\beta/e)^2 e^{2(1-\varepsilon)\lambda r} |B_{\varepsilon r}|.
\end{equation}
Suppose on the contrary that $\lambda_1(M)>\mu_*^2/4$.  Then there exist $0<\alpha<1$ and a sequence $r_k\rightarrow +\infty$ such that
$$
|B_{r_k}| \le e^{2\alpha \sqrt{\lambda_1(M)}r_k}.
$$
But this contradicts  \eqref{eq:B_2} provided $(1-\varepsilon)\lambda>\alpha \sqrt{\lambda_1(M)}$.  
\end{proof}

\begin{theorem}\label{th:BrooksNew_finite}
If $|M|<\infty$, then
\[
\lambda_1^{ess}(M) \le \frac{\alpha_*^2}{4},\ \ \ \alpha_*:=\liminf_{r\rightarrow +\infty} \frac{-\log |M\setminus B_r|}{r}.
\]
\end{theorem}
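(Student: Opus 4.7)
The plan is to mimic the proof of Theorem~\ref{th:volume_exponential}, but using the variational characterization of $\lambda_1^{ess}(M)$ and a doubly-truncated radial cut-off so that the test function is automatically supported outside a large ball. Fix $\varepsilon>0$ small; by definition of $\lambda_1^{ess}(M)$, choose $r_0=r_0(\varepsilon)$ so that
\[
(\lambda_1^{ess}(M)-\varepsilon)\int_M\phi^2\,dV \le \int_M |\nabla\phi|^2\,dV
\]
holds for every locally Lipschitz, compactly supported $\phi$ on $M\setminus B_{r_0}$. Set $A:=\sqrt{\lambda_1^{ess}(M)-\varepsilon}$.

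For arbitrary $r_0<r<r_1<r_2<r_3$, build a Lipschitz function $\chi\colon[0,+\infty)\to[0,1]$ that vanishes on $[0,r]\cup[r_3,+\infty)$, equals $1$ on $[r_1,r_2]$, and on the two sloped intervals coincides with the Lemma~\ref{lm:J} extremizers
\[
\chi(t)=\frac{\mathrm{sh}(A(t-r))}{\mathrm{sh}(A(r_1-r))}\quad\text{on }[r,r_1],\qquad \chi(t)=\frac{\mathrm{sh}(A(r_3-t))}{\mathrm{sh}(A(r_3-r_2))}\quad\text{on }[r_2,r_3].
\]
By \eqref{eq:minimum_J}, the integrand $J_\chi:=(\chi')^2-A^2\chi^2$ equals $A^2/\mathrm{sh}^2(A(r_1-r))$ on $[r,r_1]$, $-A^2$ on $[r_1,r_2]$, and $A^2/\mathrm{sh}^2(A(r_3-r_2))$ on $[r_2,r_3]$. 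Testing the variational inequality against $\phi=\chi(\rho)$ (which is compactly supported in $M\setminus B_{r_0}$) and using $|\nabla\rho|\le 1$ together with the coarea formula reduces the inequality to $\int_0^\infty J_\chi(t)\,v'(t)\,dt\ge 0$, where $v(t):=|B_t|$. Evaluating piecewise yields
\[
\frac{|B_{r_1}\setminus B_r|}{\mathrm{sh}^2(A(r_1-r))}+\frac{|B_{r_3}\setminus B_{r_2}|}{\mathrm{sh}^2(A(r_3-r_2))}\ge |B_{r_2}\setminus B_{r_1}|.
\]

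Now pass to the limit. Because $|M|<+\infty$, letting $r_3\to+\infty$ with $r,r_1,r_2$ fixed leaves $|B_{r_3}\setminus B_{r_2}|\le|M|$ bounded while $\mathrm{sh}^2(A(r_3-r_2))\to+\infty$, killing the outer term; then letting $r_2\to+\infty$ replaces $|B_{r_2}\setminus B_{r_1}|$ by $|M\setminus B_{r_1}|$, yielding
\[
|M\setminus B_{r_1}|\le\frac{|B_{r_1}\setminus B_r|}{\mathrm{sh}^2(A(r_1-r))}\le\frac{|M\setminus B_r|}{\mathrm{sh}^2(A(r_1-r))}.
\]
Taking logarithms, dividing by $r_1$, and letting $r_1\to+\infty$ with $r$ fixed (using $\log\mathrm{sh}(A(r_1-r))/r_1\to A$) gives $\alpha_*\ge 2A=2\sqrt{\lambda_1^{ess}(M)-\varepsilon}$, whence $\lambda_1^{ess}(M)\le\alpha_*^2/4$ after sending $\varepsilon\to 0^+$.

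The computation itself is routine once the Lemma~\ref{lm:J} extremizers are in hand; the only genuine technical point is the iterated limit $r_3\to\infty$, then $r_2\to\infty$, needed to discard the outer shell and convert the annulus $|B_{r_2}\setminus B_{r_1}|$ into the exterior volume $|M\setminus B_{r_1}|$. This is precisely the place where the hypothesis $|M|<+\infty$ is used essentially, mirroring its role in the proof of Theorem~\ref{th:volume_exponential}.
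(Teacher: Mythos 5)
Your proof is correct, and it takes a genuinely different route from the paper's. The paper's argument for Theorem~\ref{th:BrooksNew_finite} is a Brooks-style computation: it tests the Rayleigh quotient against the exponentially weighted cut-off $e^{(\alpha_*+\varepsilon)\rho/2}\eta_{r,R}$, and the real work lies in the selection argument producing integers $m_k$ with $F(m_k+1)\le e^{2\varepsilon}F(m_k)$, which is what tames the boundary terms $F(r+1)$ and $F(R+1)-F(R)$ in~\eqref{eq:Brooks_finite_lambda}. You instead reuse the Lemma~\ref{lm:J} extremizer machinery that the paper deploys for Theorems~\ref{th:main_2}, \ref{th:Upper} and~\ref{th:volume_exponential} but \emph{not} for this result: a doubly truncated $\mathrm{sh}$-profile tested directly in the spectral inequality for $\lambda_1(M\setminus B_{r_0})$, yielding the clean three-shell inequality
\[
\frac{|B_{r_1}\setminus B_r|}{\mathrm{sh}^2(A(r_1-r))}+\frac{|B_{r_3}\setminus B_{r_2}|}{\mathrm{sh}^2(A(r_3-r_2))}\ge |B_{r_2}\setminus B_{r_1}|,
\]
after which $|M|<\infty$ kills the outer shell as $r_3\to\infty$ and the subsequent limits $r_2\to\infty$, $r_1\to\infty$ extract $\alpha_*\ge 2A$. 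Your route buys uniformity with the rest of the paper and avoids any subsequence selection; the paper's route follows Brooks' original blueprint more closely and exposes the $e^{2\varepsilon}$-pigeonhole idea, which may generalize differently. One minor point you should flag: the piecewise evaluation of $\int_M J_\chi(\rho)\,dV$ implicitly assumes the level sets $\{\rho=r\},\{\rho=r_1\},\{\rho=r_2\},\{\rho=r_3\}$ have measure zero; since this holds for a.e.\ choice of radii (and your inequality is stable under perturbing them), this is harmless, but worth a sentence -- the same convention is tacit throughout the paper's use of such cut-offs.
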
 

\begin{proof}
For any $\varepsilon>0$, we have $|M\setminus{B_r}|\geq{e^{-(\alpha_*-\varepsilon)r}}$ when $r\gg1$. Let $R\gg{r}$. Choose a cut-off function $\eta_{r,R}:M\rightarrow[0,1]$ such that $\eta_{r,R}=0$ for $\rho\leq{r}$ and $\rho\geq{R+1}$, $\eta_{r,R}=1$ for $r+1\leq\rho\leq{R}$ and $|\nabla\eta_{r,R}|\leq1$. Set $\phi:=e^{(\alpha_*+\varepsilon)\rho/2}\eta_{r,R}$. It follows that
\begin{equation}\label{eq:Brooks_finite_1}
\int_{M\setminus{B_r}}\phi^2dV\geq\int_{r+1\leq\rho\leq{R}}e^{(\alpha_*+\varepsilon)\rho}dV
\end{equation}
and
\begin{eqnarray}\label{eq:Brooks_finite_2}
\int_{M\setminus{B_r}}|\nabla\phi|^2dV
&=& \int_{M\setminus{B_r}}\left|\frac{\alpha_*+\varepsilon}{2}e^{(\alpha_*+\varepsilon)\rho/2}\eta_{r,R}\nabla\rho+e^{(\alpha_*+\varepsilon)\rho/2}\nabla\eta_{r,R}\right|^2dV\nonumber\\
&\leq& \frac{(1+\delta)(\alpha_*+\varepsilon)^2}{4}\int_{M\setminus{B_r}}\phi^2dV\nonumber\\
& & +\left(1+\frac{1}{\delta}\right)\int_{M\setminus{B_r}}e^{(\alpha_*+\varepsilon)\rho}|\nabla\eta_{r,R}|^2dV,
\end{eqnarray}
where $\delta>0$ and
\begin{equation}\label{eq:Brooks_finite_3}
\int_{M\setminus{B_r}}e^{(\alpha_*+\varepsilon)\rho}|\nabla\eta_{r,R}|^2dV\leq\int_{r\leq\rho\leq{r+1}}e^{(\alpha_*+\varepsilon)\rho}dV+\int_{R\leq\rho\leq{R+1}}e^{(\alpha_*+\varepsilon)\rho}dV.
\end{equation}
For simplicity, we define
\[
F(t):=\int_{r\leq\rho\leq{t}}e^{(\alpha_*+\varepsilon)\rho}dV.
\]
It follows from \eqref{eq:Brooks_finite_1}-\eqref{eq:Brooks_finite_3} that
\begin{equation}\label{eq:Brooks_finite_lambda}
\lambda_1(M\setminus{B_r}) \leq \frac{(1+\delta)(\alpha_*+\varepsilon)^2}{4}+\left(1+\frac{1}{\delta}\right)\frac{F(r+1)+F(R+1)-F(R)}{F(R)-F(r+1)}.
\end{equation}
Take a sequence $\{r_k\}$ which increases to $+\infty$ such that $|M\setminus{B_{r_k}}|\geq{e^{-(\alpha_*+\varepsilon)r_k}}$ when $k\gg1$. Thus
\[
\int_{\rho\geq{r_k}}e^{(\alpha_*+\varepsilon)\rho}dV\geq{e^{(\alpha_*+\varepsilon)r_k}}|M\setminus{B_{r_k}}|\geq1,
\]
so that $\int_Me^{(\alpha_*+\varepsilon)\rho}dV=+\infty$, i.e., $\lim_{R\rightarrow+\infty}F(R)=+\infty$.

We claim that there exists a sequence $\{m_k\}$ of positive integers which increases to $+\infty$, such that
\begin{equation}\label{eq:Brooks_finite_m_k}
F(m_k+1)\leq{e^{2\varepsilon}}F(m_k).
\end{equation}
Otherwise $F(m+1)>e^{c}F(m)$ when $m\gg1$ for some $c>2\varepsilon$, so that $F(m)\gtrsim{e^{cm}}$. Thus
\begin{equation}\label{eq:Brooks_finite_F}
F(m+1)-F(m)>(e^c-1)F(m)\gtrsim{e^{cm}}.
\end{equation}
Here and in what follows in this section, the implicit constants are independent of $m$. On the other hand, we have
\begin{eqnarray*}
F(m+1)-F(m)
&=& \int_{m\leq\rho\leq{m+1}}e^{(\alpha_*+\varepsilon)\rho}dV\\
&\leq& e^{(\alpha_*+\varepsilon)(m+1)}|M\setminus{B_m}|\\
&\leq& e^{(\alpha_*+\varepsilon)(m+1)-(\alpha_*-\varepsilon)m}\\
&\lesssim& e^{2\varepsilon{m}},
\end{eqnarray*}
which is impossible, for $2\varepsilon<c$. Thus \eqref{eq:Brooks_finite_m_k} holds for some sequence $\{m_k\}$, so that
\[
\limsup_{k\rightarrow+\infty}\frac{F(r+1)+F(m_k+1)-F(m_k)}{F(m_k)-F(r+1)}\leq{e^{2\varepsilon}-1}.
\]
This together with \eqref{eq:Brooks_finite_lambda} give
\[
\lambda_1(M\setminus{B_r}) \leq \frac{(1+\delta)(\alpha_*+\varepsilon)^2}{4}+\left(1+\frac{1}{\delta}\right)(e^{2\varepsilon}-1).
\]
Letting first $\varepsilon\rightarrow0+$ and then $\delta\rightarrow0+$, we conclude that $\lambda_1(M\setminus{B_r})\leq\alpha_*^2/4$, from which the assertion immediately follows.
\end{proof}

\section{Examples}\label{sec:eg}

Let $M=\mathbb{R}\times{S^1}$ be equipped with the followsing Riemannian metric
\[
g=dt^2+\eta'(t)^2d\theta^2,\ \ \ t\in\mathbb{R},\ e^{i\theta}\in{S^1},
\]
where $\eta:\mathbb{R}\rightarrow\mathbb{R}$ is a smooth function such that $\eta'(t)>0$ and $\lim_{t\rightarrow-\infty}\eta(t)=0$. Dodziuk-Pigmataro-Randol-Sullivan \cite[Proposition 3.1]{DPRS} showed that if $\eta(t)=e^t$, then $\lambda_1(M)\geq1/4$.

Let $\rho(t,\theta)=|t|$. Clearly, $\rho$ is an exhaustion function which satisfies $|\nabla\rho|_g\leq1$. The goal of this section is to investigate the asymptotic behavior of $\lambda_1(B_r)$ as $r\rightarrow+\infty$ for different choices of $\eta$. We start with the following elementary lower estimate, .

\begin{proposition}\label{prop:example}
\[
\lambda_1(B_r)\geq\frac{1}{4}\inf_{|t|\leq{r}}\frac{\eta'(t)^2}{\eta(t)^2}.
\]
\end{proposition}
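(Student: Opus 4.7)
The proof should be a straightforward Hardy-type inequality adapted to this warped-product setting. Here is how I would proceed.

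First, compute the ingredients of the Rayleigh quotient in the metric $g = dt^2 + \eta'(t)^2 d\theta^2$: the volume element is $dV_g = \eta'(t)\,dt\,d\theta$, and for a Lipschitz function $\phi$ on $M$ we have
\[
|\nabla \phi|_g^2 = (\partial_t \phi)^2 + \eta'(t)^{-2}(\partial_\theta \phi)^2 \ge (\partial_t \phi)^2.
\]
Since $B_r = \{|t|<r\} = (-r,r)\times S^1$, for every $\phi\in C_0^\infty(B_r)$ we obtain the coarse lower bound
\[
\int_{B_r}|\nabla \phi|_g^2\, dV_g \;\ge\; \int_{S^1}\!\int_{-r}^{r}(\partial_t\phi)^2\,\eta'(t)\,dt\,d\theta.
\]

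Next, for each fixed $\theta$, apply a one-dimensional Hardy-type argument. Since $\phi(\pm r,\theta)=0$, integration by parts in $t$ (writing $\eta'(t)dt = d\eta(t)$) gives
\[
\int_{-r}^{r}\phi^2\,\eta'(t)\,dt = -2\int_{-r}^{r}\phi\,(\partial_t\phi)\,\eta(t)\,dt.
\]
Set $K := \inf_{|t|\le r}\eta'(t)^2/\eta(t)^2$, so that $\eta(t)^2 \le \eta'(t)^2/K$ on $[-r,r]$. By the Cauchy--Schwarz inequality,
\[
\int_{-r}^{r}\phi^2\,\eta'\,dt \;\le\; 2\left(\int_{-r}^{r}\phi^{2}\,\frac{\eta^{2}}{\eta'}\,dt\right)^{1/2}\!\left(\int_{-r}^{r}(\partial_t\phi)^{2}\,\eta'\,dt\right)^{1/2}\;\le\;\frac{2}{\sqrt{K}}\left(\int_{-r}^{r}\phi^{2}\eta'\,dt\right)^{1/2}\!\left(\int_{-r}^{r}(\partial_t\phi)^{2}\eta'\,dt\right)^{1/2}.
\]
Squaring and cancelling one factor of $\int \phi^2 \eta' dt$ yields the pointwise-in-$\theta$ inequality
\[
\int_{-r}^{r}\phi^2\,\eta'(t)\,dt \;\le\; \frac{4}{K}\int_{-r}^{r}(\partial_t\phi)^2\,\eta'(t)\,dt.
\]

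Finally, integrating this inequality over $\theta\in S^1$ and combining with the gradient lower bound gives
\[
\int_{B_r}\phi^2\,dV_g \;\le\;\frac{4}{K}\int_{B_r}|\nabla \phi|_g^2\,dV_g,
\]
for every $\phi\in C_0^\infty(B_r)$, which is exactly the claim $\lambda_1(B_r)\ge K/4$. No serious obstacle arises: the only step that requires care is verifying that the boundary terms at $t=\pm r$ vanish (they do, because $\phi$ is compactly supported in $B_r$) and that one is free to divide by $(\int \phi^2\eta' dt)^{1/2}$ (which is harmless, since otherwise $\phi\equiv 0$ and the inequality is trivial).
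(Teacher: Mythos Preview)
Your proof is correct and follows essentially the same route as the paper: integrate by parts in $t$ to write $\int_{-r}^{r}\phi^2\eta'\,dt$ in terms of $\int \phi\,\partial_t\phi\,\eta\,dt$, apply Cauchy--Schwarz with the weight $\eta'$, bound $\eta^2/\eta'^2$ by its supremum on $[-r,r]$, and then integrate over $\theta$. The only cosmetic differences are that you make the reduction $|\nabla\phi|_g^2\ge(\partial_t\phi)^2$ explicit and keep track of the sign in the integration by parts, whereas the paper passes directly to absolute values.
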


\begin{proof}
The idea is essentially implicit in \cite{DPRS}. Since $dV=\eta'(t)dtd\theta$, we have
\[
\int^r_{-r}\phi^2\eta'(t)dt=2\int^r_{-r}\phi\frac{\partial\phi}{\partial{t}}\eta(t)dt,\ \ \ \forall\,\phi\in{C^\infty_0(B_r)},
\]
so that
\begin{eqnarray}\label{eq:Cauchy_Schwarz_example}
\int^r_{-r}\phi^2\eta'(t)dt
&\leq& 4\int^r_{-r}\left|\frac{\partial\phi}{\partial{t}}\right|^2\frac{\eta(t)^2}{\eta'(t)}dt\leq4\int^r_{-r}\left|\nabla\phi\right|^2\frac{\eta(t)^2}{\eta'(t)}dt\nonumber\\
&\leq& 4\sup_{|t|\leq{r}}\frac{\eta(t)^2}{\eta'(t)^2}\int^r_{-r}|\nabla\phi|^2\eta'(t)dt
\end{eqnarray}
in view of the Cauchy-Schwarz inequality. Thus
\[
\int_{B_r}\phi^2dV
= \int^{2\pi}_0\int^r_{-r}\phi^2\eta'(t)dtd\theta \leq 4\sup_{|t|\leq{r}}\frac{\eta(t)^2}{\eta'(t)^2}\int_{B_r}|\nabla\phi|^2dV,
\]
from which the assertion immediately follows.
\end{proof}

\begin{example}
Given $\alpha>0$, take $\eta$ such that
\[
\eta(t)=\begin{cases}
(-t)^{-\alpha},\ \ \ &t<-1,\\
2t^\alpha,\ \ \  &t>1.
\end{cases}
\]
Then
\begin{itemize}
\item[$(1)$]
$\Lambda_*\asymp\nu_*^2$, so that $M$ is hyperbolic when $\alpha\gg1$.

\item[$(2)$]
$M$ is parabolic if and only if\/ $0<\alpha\leq2$.
\end{itemize}
\end{example}

\begin{proof}
(1) By Proposition \ref{prop:example}, we have
\[
\Lambda_*=\liminf_{r\rightarrow +\infty} \{ r^2 \lambda_1(B_r)\}\geq\frac{\alpha^2}{4}.
\]
On the other hand, since
\[
|B_r|=2\pi\int^r_{-r}\eta'(t)dt=2\pi(\eta(r)-\eta(-r))=4\pi{r}^\alpha-2\pi{r}^{-\alpha},\ \ \ \forall\,r\gg1,
\]
we see that
\[
\nu_*=\liminf_{r\rightarrow +\infty} \frac{\log |B_r|}{\log r}=\alpha.
\]
Thus $\Lambda_*\geq\nu_*^2/4$. This together with Theorem \ref{th:Upper} give $\Lambda_*\asymp\nu_*^2$. In particular, $M$ is hyperbolic provided $\alpha\gg1$, in view of Theorem \ref{th:main}. 

(2) We first verify the \textit{if} part. It suffices to verify that $\mathrm{cap}(B_1)=0$, in view of Theorem \ref{th:Capacity}. Let $\chi:(0,+\infty)\rightarrow[0,1]$ be the Lipschitz continuous function with $\chi=1$ on $[0,1]$, $\chi=0$ on $[r,+\infty)$ and
\[
\chi(t)=\frac{\log{r}-\log{t}}{\log{r}},\ \ \ t\in(1,r).
\]
Let $\psi(t,\theta):=\chi(t)$.Then $\psi$ is a Lipschitz continuous function on $M$ which satisfies $\psi|_{B_1}=1$, $\mathrm{supp}\,\psi\subset{B_r}$ and $|\nabla\psi|=(\log{r})^{-1}t^{-1}$ on $B_r\setminus\overline{B}_1$. Since $\eta'(t)=2\alpha{t^{\alpha-1}}\leq2t$ when $t\geq1$ and $0<\alpha\leq2$, we have
\[
\int_M|\nabla\psi|^2=\frac{2\pi}{(\log{r})^2}\int^r_1\frac{\eta'(t)}{t^2}dt\leq\frac{4\pi}{(\log{r})^2}\int^r_1\frac{dt}{t}=\frac{4\pi}{\log{r}},
\]
i.e., $\mathrm{cap}(B_1)=0$ when $0<\alpha\leq2$.

For the \textit{only if} part, a straightforward computation shows
\[
\Delta{f}=\frac{\partial^2f}{\partial{t}^2}+\frac{\eta''(t)}{\eta'(t)}\frac{\partial{f}}{\partial{t}}+\frac{1}{\eta'(t)^2}\frac{\partial^2f}{\partial\theta^2}-\frac{\eta''(t)}{\eta'(t)^3}\frac{\partial{f}}{\partial\theta}.
\]
In particular, if $f$ is independent of $\theta$, then
\[
\Delta{f}=f''(t)+\frac{\eta''(t)}{\eta'(t)}f'(t)=\frac{(\eta'f')'(t)}{\eta'(t)}.
\]
Note that $\int^{+\infty}_1\frac{ds}{\eta'(s)}<+\infty$ if $\alpha>2$. Let $0<c<\int^{+\infty}_1\frac{ds}{\eta'(s)}$ and $\tau$ a smooth, convex and increasing function on $(-\infty,+\infty)$ such that $\tau(x)\equiv{c}$ for $x\leq{c/2}$ and $\tau(x)=x$ for $x\geq{2c}$. Thus
\[
f(t)=\begin{cases}
\tau\left(\int^t_1\frac{ds}{\eta'(s)}\right),\ \ \ &t\geq1,\\
c,\ \ \ &t\leq1
\end{cases}
\]
gives a nonconstant smooth bounded subharmonic function on $M$, so that $M$ is hyperbolic.
\end{proof}

\begin{example}
Given $\alpha>0$, take $\eta$ such that
\begin{equation}\label{eq:eta_exp}
\eta'(t)=e^{-\alpha|t|},\ \ \ |t|>1.
\end{equation}
Then
\begin{equation}\label{eq:exp_example_2}
\lambda_1(B_r)\gtrsim{e^{-\alpha{r}}},
\end{equation}
and
\[
\liminf_{r\rightarrow+\infty}\frac{-\log(\lambda_1(B_r))}{r}=\alpha=\liminf_{r\rightarrow+\infty}\frac{-\log(|M\setminus B_r|)}{r},
\]
i.e., the estimate in Theorem \ref{th:volume_exponential} is sharp.
\end{example}

\begin{proof}
First of all, since
\[
|M\setminus B_r|=4\pi\int^\infty_re^{-\alpha{t}}dt\asymp{e^{-\alpha{r}}},\ \ \ r\gg1,
\]
we have $\liminf_{r\rightarrow+\infty}\frac{-\log|M\setminus B_r|}{r}=\alpha$, which implies
\[
\liminf_{r\rightarrow+\infty}\frac{-\log(\lambda_1(B_r))}{r}\geq\alpha,
\]
in view of Theorem \ref{th:volume_exponential}.

Next, we shall use the following Hardy-type inequality (cf.  Opic-Kufner \cite{OpicKufner}, pp. 100--103)
\begin{equation}\label{eq:Hardy_exp}
\int^r_{-r}\phi(t)^2\eta'(t)dt\lesssim{e^{\alpha{r}}}\int^r_{-r}\phi'(t)^2\eta'(t)dt,\ \ \ \forall\,\phi\in{C^\infty_0((-r,r))},
\end{equation}
where the implicit constant is independent of $r$. For reader's convenience, we include here a rather short proof for this special case. Since $\int^{+\infty}_{-\infty}\eta'(t)dt$ is finite in view of \eqref{eq:eta_exp}, we have
\begin{equation}\label{eq:Hardy_exp_LHS}
\int^r_{-r}\phi(t)^2\eta'(t)dt\leq\sup_{-r<{t}<{r}}\phi(t)^2\int^r_{-r}\eta'(t)dt\lesssim\sup_{-r<{t}<{r}}\phi(t)^2.
\end{equation}
On the other hand, by setting $|\phi(t_0)|=\sup_{-r<{t}<{r}}|\phi(t)|$, we have
\[
\int^r_{-r}|\phi'(t)|dt\geq\int^{t_0}_{-r}|\phi'(t)|dt\geq\left|\int^{t_0}_{-r}\phi'(t)dt\right|=|\phi(t_0)|=\sup_{-r<{t}<{r}}|\phi(t)|.
\]
This together with Cauchy-Schwarz inequality yield
\begin{equation}\label{eq:Hardy_exp_RHS}
\sup_{-r<{t}<{r}}\phi(t)^2\leq\left(\int^r_{-r}\phi'(t)^2\eta'(t)dt\right)\left(\int^r_{-r}\frac{1}{\eta'(t)}dt\right)\lesssim{e^{\alpha{r}}}\int^r_{-r}\phi'(t)^2\eta'(t)dt.
\end{equation}
By \eqref{eq:Hardy_exp_LHS} and \eqref{eq:Hardy_exp_RHS}, we obtain \eqref{eq:Hardy_exp}, which in turn gives \eqref{eq:exp_example_2}, i.e.,
\[
\liminf_{r\rightarrow+\infty}\frac{-\log(\lambda_1(B_r))}{r}\leq\alpha.
\]
\end{proof}

\begin{remark}
By Proposition \ref{prop:example}, we only obtain a weaker conclusion
\[
\lambda_1(B_r)\geq\frac{1}{4}\frac{\eta'(r)^2}{\eta(r)^2}\gtrsim{e^{-2\alpha{r}}}.
\]
\end{remark}

\begin{example}

Let $\mu$ be a positive, smooth and decreasing function on $[1,+\infty)$ satisfying
\begin{itemize}
\item[$(1)$]
$\lim_{t\rightarrow+\infty}\mu(t)=0$,

\item[$(2)$]
$\int^{+\infty}_1\mu(s)ds=+\infty$,

\item[$(3)$]
$t\mu(t)$ is increasing on $[c,+\infty)$ for some $c\gg1$.
\end{itemize}
Take $\eta$ such that
\[
\eta(t)=\begin{cases}
e^{-\int^{-t}_1\mu(s)ds},\ \ \ &t<-1,\\
2e^{\int^t_1\mu(s)ds},\ \ \ &t>1.
\end{cases}
\]
Then
\begin{equation}\label{eq:lambda_mu}
\lambda_1(B_r)\asymp\mu(r)^2.
\end{equation}
\end{example}

\begin{proof}
Note that $\eta'(t)/\eta(t)=\mu(-t)$ for $t<-1$ and $\eta'(t)/\eta(t)=\mu(t)$ for $t>1$. Thus it follows from Proposition \ref{prop:example} that
\begin{equation}\label{eq:mu_behavior_lower}
\lambda_1(B_r)\geq\frac{1}{4}\inf_{|t|\leq{r}}\frac{\eta'(t)^2}{\eta(t)^2}=\frac{\mu(r)^2}{4}.
\end{equation}

On the other hand, we have $r\mu(r)\geq{c\mu(c)}>0$ for $r\geq{c}\gg1$ in view of the condition (3). Thus we may take $0<\varepsilon\leq{c\mu(c)}/2$ so that
\begin{equation}\label{eq:r_epsilon}
r_\varepsilon:=r-\varepsilon\mu(r)^{-1}=r\left(1-\varepsilon{r}^{-1}\mu(r)^{-1}\right)\geq\frac{r}{2},\ \ \ \forall\,r\geq{c}.
\end{equation}
Set $I_r:=(-r,-r_\varepsilon)$. Since $\eta''(t)=-\mu'(-t)\eta(t)+\mu(-t)\eta'(t)\geq0$, i.e., $\eta'(t)$ is increasing, on $(-\infty,-1]$, it follows that
\begin{eqnarray*}
\lambda_1(B_r)
&\leq& \lambda_1\left(\{(t,\theta)\in{M}:-r\leq{t}\leq{-r_\varepsilon}\}\right)\\
&\leq& \inf_{\phi\in{C^\infty_0(I_r)}}\left\{\frac{\int_{I_r}\phi'(t)^2\eta'(t)dt}{\int_{I_r}\phi(t)^2\eta'(t)dt}\right\}\\
&\leq& \inf_{\phi\in{C^\infty_0(I_r)}}\left\{\frac{\int_{I_r}\phi'(t)^2dt}{\int_{I_r}\phi(t)^2dt}\right\}\cdot\frac{\eta'(-r_\varepsilon)}{\eta'(-r)}\\
&=& \lambda_1(I_r)\cdot\frac{\eta'(-r_\varepsilon)}{\eta'(-r)}.
\end{eqnarray*}
Since $\lambda_1(I_r)\lesssim|I_r|^{-2}\asymp\mu(r)^2$, we obtain
\begin{equation}\label{eq:mu_behavior_upper}
\lambda_1(B_r)\lesssim\mu(r)^2\cdot\frac{\eta'(-r_\varepsilon)}{\eta'(-r)}.
\end{equation}
We have
\[
\frac{\eta'(-r_\varepsilon)}{\eta'(-r)}=\frac{\mu(r_\varepsilon)}{\mu(r)}\exp\left(\int^r_{r_\varepsilon}\mu(s)ds\right)\leq\frac{\mu(r_\varepsilon)}{\mu(r)}\exp\left(\varepsilon\frac{\mu(r_\varepsilon)}{\mu(r)}\right),
\]
for $\mu$ is decreasing and $r-r_\varepsilon=\varepsilon\mu(r)^{-1}$. By condition (3) and \eqref{eq:r_epsilon}, we have
\[
\frac{\mu(r_\varepsilon)}{\mu(r)}\leq\frac{r}{r_\varepsilon}\leq2.
\]
Thus $\frac{\eta'(-r_\varepsilon)}{\eta'(-r)}=O(1)$ as $r\rightarrow+\infty$. This together with \eqref{eq:mu_behavior_lower} and \eqref{eq:mu_behavior_upper} give \eqref{eq:lambda_mu}.
\end{proof}

Particular choices of $\mu$ give the following
\begin{itemize}
\item[$(1)$]
For $\mu(t)=t^{-1}(\log{t})^\beta$ with $\beta>0$, $\lambda_1(B_r)\asymp{r^{-2}}(\log{r})^{2\beta}$.

\item[$(2)$]
For $\mu(t)=t^{-\alpha}$ with $0<\alpha<1$, $\lambda_1(B_r)\asymp{r^{-2\alpha}}$.

\item[$(3)$] For $\mu(t)=(\log(t+1))^{-\gamma}$ with $\gamma>0$, $\lambda_1(B_r)\asymp(\log{r})^{-2\gamma}$.
\end{itemize}
\noindent In all three cases, we have
\[
\Lambda_*=\liminf_{r\rightarrow+\infty}\left\{r^2\lambda_1(B_r)\right\}=+\infty.
\]
Thus these Riemannian manifolds $(M,g)$ are hyperbolic in view of Theorem \ref{th:main}.


\begin{thebibliography}{99} 
  
\bibitem{Ahlfors35} L. V. Ahlfors, {\it Sur le type d'une surface de Riemann}, C. R. Acad. Sci. Paris {\bf 201} (1935), 30--32.  
  
  \bibitem{Brooks} R. Brooks,  {\it A relation between growth and the spectrum of the Laplacian},  Math.  Z.  {\bf 178} (1981),  501--508.  

  \bibitem{Brooksfinite} R. Brooks,  {\it On the spectrum of non-compact manifolds with finite volume},  Math. Z. {\bf 187} (1984),  425--432. 
  
\bibitem{CF} B.-Y. Chen and S. Fu, {\it Stability of the Bergman kernel on a tower of covering}, J. Diff. Geom. {\bf 104} (2016), 371--398.

\bibitem{CX} B.-Y. Chen and Y. Xiong, {\it Curvature and $L^p$ Bergman spaces on complex submanifolds in $\mathbb C^N$},  J.  Geom.  Anal.  {\bf 31} (2021),  7352--7385.

\bibitem{ChengYau75} S. Y. Cheng and S.-T Yau, {\it Differential equations on Riemannian manifolds and their geometric applications}, Comm. Pure Appl. Math. {\bf 28} (1975), 333--354. 

\bibitem{DPRS} J. Dodziuk, T. Pignataro, B. Randol and D. Sullivan, {\it Estimating small eigenvalues of Riemann surfaces},   Contemp. Math. {\bf 64} (1987), 93--121.

\bibitem{Forstneric} F. Forstneri\v{c}, {\it Domains without parabolic minimal submanifolds and weakly hyperbolic domains}, Bull. Lond. Math. Soc. {\bf 55} (2023), 2778--2792.
  
\bibitem{Grigoryan83} A. Grigor'yan, {\it Existence of the Green function on a manifolds}, Uspekhi Mat. Nauk {\bf 38} (1983), 161--162.

\bibitem{Grigoryan85} A. Grigor'yan, {\it The existence of positive fundamental solutions of the Laplace equation on Riemannian manifolds}, Math. Sb. (N.S.) {\bf 128} (1985), 354--363.
  
\bibitem{Grigoryan} A. Grigor'yan, {\it Analytic and geometric background for recurrence and non-explosion of the Brownian motion on Riemannian manifolds}, Bull. Amer. Math. Soc. {\bf 36} (1999), 135--249.

\bibitem{Gromov91} M. Gromov, {\it K\"{a}hler hyperbolicity and $L_2$-Hodge theory}, J. Diff. Geom. {\bf 33} (1991), 263--292.




\bibitem{Karp82} L. Karp, {\it Subharmonic functions, harmonic mappings and isometric immersions}, Seminar on Differential Geometry, Ann. Math. Stud., No. 102, Princeton University Press, Princeton, NJ, University of Tokyo Press, Tokyo, 1982, 133--142.

\bibitem{LyonsSullivan} T. Lyons and D. Sullivan, {\it Function theory, random paths and covering spaces}, J. Differential Geom. {\bf 19} (1984), 299--323.

\bibitem{MP03} S. Markvorsen and V. Palmer, {\it Transience and capacity of minimal submanifolds}, Geom. Funct. Anal. {\bf 13} (2003), 915--933.

\bibitem{Nevanlinna40} R. Nevanlinna, {\it Ein S\"{a}tz \"{u}ber offene Riemannsche Fl\"{a}chen}, Ann. Acad. Sci. Fennicae (A). {\bf 54} (1940), 1--18.

\bibitem{OpicKufner} B. Opic and A. Kufner, Hardy-type inequalities, Longman Scientific \& Technical, Harlow, 1990.

\bibitem{Varopoulos} N. T. Varopoulos, {\it Potential theory and diffusion on Riemannian manifolds}, Conference on harmonic analysis in honor of Antoni Zygmund, Vol. I, II (Chicago, III., 1981), Wadsworth Math. Ser., Wadsworth International Group, Belmont, CA, 1983, 821--837.
  \end{thebibliography}
\end{document}